\newtheorem{theorem}{Theorem}[section]
\newtheorem{proposition}[theorem]{Proposition}
\newtheorem{corollary}[theorem]{Corollary}
\theoremstyle{definition}
\newtheorem{definition}[theorem]{Definition}
\newtheorem{example}[theorem]{Example}
\newtheorem{remark}[theorem]{Remark}
\begin{document}
\title[A generalized Hurwitz metric] {A generalized Hurwitz metric}

\author[Arstu and S. K. Sahoo]{Arstu$^\dagger$ and Swadesh Kumar Sahoo$^\dagger$\,$^*$}

\address{$^\dagger$Discipline of Mathematics, Indian Institute of Technology Indore, 
Simrol, Khandwa Road, Indore 453 552, India}
\email{arstumothsra@gmail.com}
\email{swadesh.sahoo@iiti.ac.in}

\subjclass[2010]{Primary: 30F45; Secondary: 30C20, 30C80}

\keywords{Hyperbolic metric, Kobayashi metric, Hurwitz metric, Hurwitz covering, 
generalized Hurwitz metric, hyperbolic domain, Lipschitz domain}
\thanks{*~The corresponding author}
\begin{abstract}
In 2016, the Hurwitz metric was introduced by D. Minda in arbitrary proper subdomains of the complex plane and he proved that this metric coincides with the 
Poincar\'e's hyperbolic metric when the domains are simply connected. In this paper, we provide an alternate definition of the Hurwitz metric through which
we could define a generalized Hurwitz metric in arbitrary subdomains of the complex plane. This paper mainly highlights various 
important properties of the Hurwitz metric and the generalized metric including the situations where they coincide with each other.     
\end{abstract}

\maketitle
\section{Introduction and Preliminaries}\label{sec1}
\setcounter{equation}{0}

In $19^{\rm th}$ century, the notion of hyperbolic metric was first introduced. As pointed out, for instance in \cite[p.~132]{ke07} and \cite{Min16}, the hyperbolic density on a hyperbolic domain $\Omega$ can be understood through the extremal problem of maximizing $|f'(0)|$ over all holomorphic functions $f$ that map the unit disk into $\Omega$. In 1981, Hahn \cite{Hahn81} introduced a pseudo-differential metric for complex manifolds by means of an extremal problem. 
Two years later, Minda \cite{Min83} reconsidered the Hahn metric in Riemann surfaces. Recently, Minda considered an extremal problem of Hurwitz \cite{Hur1904} and introduced a new conformal metric, namely, the Hurwitz metric \cite{Min16}
in any proper subdomain of the complex plane $\mathbb{C}$. Our objective in this paper is to
investigate further properties of the Hurwitz metric
and their applications. 

In $2007,$ Keen and Lakic \cite{Kee07} defined some new densities in arbitrary plane domains that generalize the hyperbolic density. They are namely the generalized Kobayashi density (see \cite[Definition~2]{Kee07}) and the generalized Carath\'eodory density (see \cite[Definition~9.2, p. 166]{ke07}). The Kobayashi density is defined by pushing forward the hyperbolic density from the unit disk to a plane domain by a holomorphic function, whereas, the Carath\'eodory density is defined by pulling back the hyperbolic density from a plane domain to the unit disk by a holomorphic function. This paper deals with a generalized Hurwitz metric
in the sense of Kobayashi. We are considering a generalized Hurwitz 
metric in the sense of Carath\'eodory in our next paper.  
Since holomorphic functions are infinitesimal contractions in the hyperbolic metric, it is easy to see that the generalized Kobayashi density exceeds over hyperbolic density on hyperbolic domains. Furthermore, the hyperbolic and the generalized Kobayashi densities coincide whenever there is a regular holomorphic covering map \cite[p~125]{ke07} from the source domain to the range domain. Similar to the case of hyperbolic distance the generalized Kobayashi distance, in
association with the generalized Kobayashi density, between two points can be defined by taking infimum of the generalized Kobayashi length of all rectifiable paths joining the points. In fact, with this definition, it becomes a complete metric space. 

As an analogue of the generalized Kobayashi density, we shall generalize the Hurwitz
metric by pushing forward the Hurwitz density from a proper subdomain of the complex plane to an arbitrary domain by holomorphic functions with some specific properties. We call this new density {\em the generalized Hurwitz density}. Note that, on hyperbolic domains the Hurwitz density exceeds the hyperbolic density. Furthermore, in this work we prove that the generalized Hurwitz density is always greater than the Hurwitz density.

Throughout this article, our notations are relatively standard and we are working mainly on the complex plane $\mathbb{C}.$ First we denote the open unit disk by $\mathbb{D}:=\{w\in\mathbb{C}:|w|<1\}.$ 
The classical Hyperbolic density \cite[p.~33]{ke07} in $\mathbb{D}$ is defined as 
$$
\lambda_{\mathbb{D}}(w)=\frac{2}{1-|w|^2}
$$ 
for $w\in\mathbb{D}$.
Note that we consider the hyperbolic metric with constant curvature $-1$.
The hyperbolic distance between two points $w_1,w_2$ in $\mathbb{D}$ is
$$
\lambda_{\mathbb{D}}(w_1,w_2)=\inf\int_{\gamma}\lambda_{\mathbb{D}}(w)\,|dw|,
$$
where the infimum is taken over all paths $\gamma$ joining $w_1$ and $w_2$ 
in $\mathbb{D}$. Since the hyperbolic metric is conformal invariant, 
by the Riemann Mapping Theorem one can easily define it on the proper simply 
connected domains of $\mathbb{C}$. 
However, this metric is also defined in more general domains so-called hyperbolic domains.
A plane domain $\Omega$ is called hyperbolic if $\mathbb{C}\setminus\Omega$ contains at least two points.
On a hyperbolic domain $\Omega$, 
the hyperbolic density $\lambda_{\Omega}$ \cite[p.~124]{ke07}  is 
$$
\lambda_{\Omega}(w)=\frac{\lambda_{\mathbb{D}}(t)}{|\pi'(t)|},
$$ 
where $\pi:\mathbb{D}\to\Omega$ is a universal covering map with $\pi(t)=w$. 
Analogue to the case of the unit disk, the hyperbolic distance between two points 
$w_1$ and $w_2$ in $\Omega$ is defined by
$$
\lambda_{\Omega}(w_1,w_2)=\inf\int_{\gamma}\lambda_{\Omega}(w)\,|dw|,
$$
where the infimum is taken over all paths $\gamma$ joining $w_1$ and $w_2$ in $\Omega$.

We now define some notations that are used in the definition of the Hurwitz density defined in \cite{Min16}.

Throughout this paper we denote by $\mathcal{H}(Y,\Omega)$ for the set of all holomorphic functions from a domain $Y$ to another domain $\Omega.$
For a fixed point $s\in\mathbb{D}$, let $\mathcal{H}_s(w,\Omega)$ be the family of all holomorphic functions $h$ from $\mathbb{D}$ into $\Omega$ with $h(s)=w,~h(t)\neq w$ for all $t\in\mathbb{D}\setminus\{s\}$ and $h'(s)>0.$ 
For a point $w\in\Omega$, we write
$\mathcal{H}(w,\Omega):=\mathcal{H}_0(w,\Omega)$.
The \textit{Hurwitz density} \cite{Min16} in $\Omega$ is defined as
$$
\eta_{\Omega}(w)=\frac{2}{G'(0)}=\frac{2}{r_\Omega(w)},
$$ 
where $G'(0)=\max\{h'(0):~h\in\mathcal{H}(w,\Omega)\}=:r_\Omega(w).$ 
If $\Omega\subsetneq\mathbb{C}$ is a domain and $b\in\Omega$ is any point, then the covering map $G:\mathbb{D}\setminus\{0\}\rightarrow\Omega\setminus\{b\}$ extends to a holomorphic function $G_b:\mathbb{D}\rightarrow\Omega$ with $G_b(0)=b,~G_b'(0)>0$. The extended holomorphic function $G_b$ is called the {\em Hurwitz covering} \cite{Min16} of $(\mathbb{D},0)$ onto $(\Omega,b).$ 

The distance decreasing property, which is stated below, of the Hurwitz density for the holomorphic function plays a crucial role to prove our results in this article.

\medskip
\noindent{\bf Theorem A}\label{thmB} (Distance decreasing property of the Hurwitz density). \cite{Min16}
{\em Suppose that $\Omega$ and $\triangle$ are proper subdomains of $\mathbb{C},~a\in\Omega$ and $b\in\triangle.$ If $h$ is a holomorphic function of $\Omega$ into $\triangle$ with $h(a)=b$ and $h(w)\neq b$ for $w\in\Omega\setminus\{a\},$ then
$$
\eta_{\triangle}(b)|h'(a)|\leq\eta_{\Omega}(a).
$$ 
Moreover, equality holds if and only if $h$ is a covering of $\Omega\setminus\{a\}$ onto $\triangle\setminus\{b\}$ that extends to a holomorphic map of $\Omega$ onto $\triangle$ with $h(a)=b$ and $h'(a)\neq 0.$}
 
The Structure of this document is organized as follows. In Section $\ref{sec3}$, certain basic properties of the Hurwitz density are studied leading to the concept of Hurwitz distance which produces the completeness property of the metric space.
Furthermore, Section~$\ref{sec4}$ deals with some basic properties of the generalized Hurwitz density. Finally, in Section $\ref{sec5}$, we relate the Hurwitz and the the generalized Hurwitz densities over some specific plane domains having certain geometric properties.

\section{The Hurwitz Metric}\label{sec3}
\setcounter{equation}{0}

Firstly, we present here a characterization of the Hurwitz density which gives us ideas to introduce the notion of generalized Hurwitz density in the next section. Let $F$ be the extremal function for the extremal problem
 $
 \max\{h'(s): h\in\mathcal{H}_s(w,\Omega)\}.
 $ 
 Then, we have 
\begin{align*}
 F'(s)&=\max\{h'(s): h\in\mathcal{H}_s(w,\Omega)\}\\
     &=\max\{h'(s)=(f\circ T)'(s): f\in\mathcal{H}(w,\Omega)~\mbox{and $T$, the M\"obius transformation}\\ 
     & \hspace*{6.5cm}\mbox{of $\mathbb{D}$ onto itself with $T(s)=0$ and $T'(s)>0$}\}.
\end{align*}
Since $T(z)=(z-s)/(1-\overline{s}z)$, it follows that
$$
F'(s)=\max\left\{\cfrac{f'(0)}{1-|s|^2}:~f\in\mathcal{H}(w,\Omega)\right\}.
$$
 Since the hyperbolic density on $\mathbb{D}$ is given by $\lambda_{\mathbb{D}}(s)=2/(1-|s|^2),$ by the notations defined in the previous section, we have
 $$
 F'(s)=\cfrac{\lambda_{\mathbb{D}}(s)}{\eta_{\Omega}(w)}.
 $$
 By using this argument, we provide here an alternate definition of the Hurwitz density as follows:

\begin{definition}\label{def1}
The {\em Hurwitz density} on a proper subdomain $\Omega$ of $\mathbb{C}$ is defined as
\begin{equation}{\label{eq1}}
\eta_{\Omega}(w)=\cfrac{\eta_{\mathbb{D}}(s)}{g'(s)},
\end{equation}
where $g=h\circ T$ such that $T$ is the M\"obius transformation from $\mathbb{D}$ onto $\mathbb{D}$ with $T(s)=0,~T'(s)> 0$ and $h$ is the Hurwitz covering map from $\mathbb{D}$ onto $\Omega$ with $h(0)=w.$
\end{definition}

To define the Hurwitz distance between any two points in $\Omega$ we integrate the density $\eta_{\Omega}$ and obtain the following definition:

\begin{definition}\label{def2}[Hurwitz distance]
For $w_1,w_2$ in $\Omega,$ we define 
$$
\eta_{\Omega}(w_1,w_2)=\inf\int_{\gamma}\eta_{\Omega}(w)|dw|,
$$ 
where infimum is taken over all rectifiable paths $\gamma$ in $\Omega$ joining $w_1$ and $w_2.$
\end{definition}

Note that we are using the same notation for the Hurwitz density as well as the Hurwitz distance between any two points where the distinction can be observed by seeing the number of parameters. However, now onward, for simplicity, we sometimes use the notation $\eta_{\Omega}$ for $\eta_{\Omega}(w_1,w_2).$ To justify our above definition we indeed prove that $\eta_{\Omega}$ defines a metric when the domain $\Omega$ is assumed to be hyperbolic.

\begin{theorem}\label{thm1}
If $\Omega$ is a hyperbolic domain, then $(\Omega,\eta_{\Omega})$ is a complete metric space.
\end{theorem}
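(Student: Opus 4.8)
The plan is to locate the Hurwitz density between the hyperbolic density and a multiple of the reciprocal of the boundary distance, and then to read off everything from this sandwich together with completeness of the hyperbolic metric. So the first step is to record that on any hyperbolic domain $\Omega$,
\[
\lambda_{\Omega}(w)\ \le\ \eta_{\Omega}(w)\ \le\ \frac{2}{\operatorname{dist}(w,\mathbb{C}\setminus\Omega)}\qquad\text{for all }w\in\Omega .
\]
The left inequality is the observation recalled in the introduction: $\mathcal{H}(w,\Omega)$ sits inside the family of all holomorphic maps $h\colon\mathbb{D}\to\Omega$ with $h(0)=w$ and $h'(0)\ge 0$, whose extremal value of $h'(0)$ equals $|\pi'(0)|=2/\lambda_{\Omega}(w)$ (lift such an $h$ through the universal cover $\pi$ with $\pi(0)=w$, $\pi'(0)>0$, and apply Schwarz's lemma); hence $r_{\Omega}(w)\le 2/\lambda_{\Omega}(w)$ and $\eta_{\Omega}(w)=2/r_{\Omega}(w)\ge\lambda_{\Omega}(w)$. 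For the right inequality, set $\delta=\operatorname{dist}(w,\mathbb{C}\setminus\Omega)\in(0,\infty)$; the map $t\mapsto w+\delta t$ lies in $\mathcal{H}(w,\Omega)$, since it sends $\mathbb{D}$ into $D(w,\delta)\subset\Omega$, attains the value $w$ only at $0$, and has derivative $\delta>0$ at $0$, so $r_{\Omega}(w)\ge\delta$ and $\eta_{\Omega}(w)\le 2/\delta$. Consequently $\eta_{\Omega}$ is locally bounded above and below by positive constants, and (being moreover continuous, cf.\ \cite{Min16}) has well-defined finite integrals along rectifiable arcs, so Definition~\ref{def2} makes sense.

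Next I would verify that $\eta_{\Omega}$ is a metric. Symmetry is immediate because reversing a path does not change its $\eta_{\Omega}$-length, and the triangle inequality follows by concatenating near-optimal arcs for $\eta_{\Omega}(w_1,w_2)$ and $\eta_{\Omega}(w_2,w_3)$. For finiteness of $\eta_{\Omega}(w_1,w_2)$, join the two points by a polygonal arc $\gamma\subset\Omega$; the continuous function $\operatorname{dist}(\cdot,\mathbb{C}\setminus\Omega)$ has a positive minimum on the compact set $\gamma$, so the upper bound of the first step makes the $\eta_{\Omega}$-length of $\gamma$ finite. Finally, integrating $\lambda_{\Omega}\le\eta_{\Omega}$ along an arbitrary path gives $\eta_{\Omega}(w_1,w_2)\ge\lambda_{\Omega}(w_1,w_2)$, which is strictly positive whenever $w_1\ne w_2$ because $\lambda_{\Omega}$ is already known to be a metric on the hyperbolic domain $\Omega$; hence $\eta_{\Omega}(w_1,w_2)=0$ forces $w_1=w_2$, and $(\Omega,\eta_{\Omega})$ is a metric space.

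For completeness, let $(w_n)$ be an $\eta_{\Omega}$-Cauchy sequence. The inequality $\lambda_{\Omega}(\cdot,\cdot)\le\eta_{\Omega}(\cdot,\cdot)$ shows $(w_n)$ is $\lambda_{\Omega}$-Cauchy, and since the hyperbolic metric on a hyperbolic plane domain is complete (see, e.g., \cite{ke07}), there is $w_0\in\Omega$ with $w_n\to w_0$ in $(\Omega,\lambda_{\Omega})$. Because $\lambda_{\Omega}$ is continuous and strictly positive it is bounded below by some $c>0$ on a closed Euclidean disk $\overline{D(w_0,\rho)}\subset\Omega$, and any path starting at $w_0$ that leaves $D(w_0,\varepsilon)$ (with $0<\varepsilon<\rho$) has $\lambda_{\Omega}$-length at least $c\varepsilon$; hence $\lambda_{\Omega}(w_0,w_n)\to 0$ forces $|w_n-w_0|\to 0$. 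Now choose $\rho'>0$ with $\overline{D(w_0,2\rho')}\subset\Omega$, so that $\operatorname{dist}(\cdot,\mathbb{C}\setminus\Omega)\ge\rho'$ on $D(w_0,\rho')$ and therefore $\eta_{\Omega}\le 2/\rho'$ there. For all large $n$ the segment $[w_0,w_n]$ lies in $D(w_0,\rho')$, so $\eta_{\Omega}(w_0,w_n)\le(2/\rho')\,|w_n-w_0|\to 0$. Thus $(w_n)$ converges in $(\Omega,\eta_{\Omega})$, and the space is complete.

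The metric-axiom verifications are routine; the real content is the completeness argument, and it rests entirely on the two-sided bound of the first step. The lower bound $\lambda_{\Omega}\le\eta_{\Omega}$ is what guarantees that an $\eta_{\Omega}$-Cauchy sequence cannot escape to the boundary (otherwise it would be non-Cauchy for the complete hyperbolic metric), and the local upper bound $\eta_{\Omega}\le 2/\operatorname{dist}(\cdot,\mathbb{C}\setminus\Omega)$ is what lets hyperbolic (equivalently Euclidean) convergence be promoted back to $\eta_{\Omega}$-convergence. I expect the only genuinely delicate point to be this passage between the three metrics near the limit point $w_0$; obtaining the two comparison inequalities themselves is short and elementary.
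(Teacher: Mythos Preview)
Your argument is correct, but the route to completeness differs from the paper's. The paper proves only the lower bound $\lambda_{\Omega}\le\eta_{\Omega}$ (and hence $\lambda_{\Omega}(w_1,w_2)\le\eta_{\Omega}(w_1,w_2)$) and then invokes a Hopf--Rinow type characterization for length spaces \cite[Theorem~2.5.28]{Bur01}: since $(\Omega,\lambda_{\Omega})$ is complete, each closed hyperbolic ball $\overline{D}_{\lambda_{\Omega}}(a,r)$ is compact, and the inclusion $\overline{D}_{\eta_{\Omega}}(a,r)\subset\overline{D}_{\lambda_{\Omega}}(a,r)$ forces every closed Hurwitz ball to be compact as well, which in turn yields completeness. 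Your proof instead works directly with Cauchy sequences and an additional upper bound $\eta_{\Omega}(w)\le 2/\operatorname{dist}(w,\mathbb{C}\setminus\Omega)$; this extra inequality is what lets you promote hyperbolic (equivalently Euclidean) convergence of the limit point back to $\eta_{\Omega}$-convergence. The trade-off is clear: the paper's argument is shorter and uses only the one-sided comparison, at the price of quoting a general metric-geometry theorem (and tacitly needing that the $\eta_{\Omega}$-topology agrees with the Euclidean one so that the closed-ball inclusion really gives a closed subset of a compact set); your approach is more self-contained and elementary, making explicit the local equivalence of the Euclidean, hyperbolic, and Hurwitz metrics that the paper's proof uses only implicitly.
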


\begin{proof}
 By the definition of $\eta_{\Omega},$ symmetry and triangle inequality follow directly. Therefore to prove that $(\Omega,\eta_{\Omega})$ is a metric space, we need to prove strictly positivity of the Hurwitz distance between any two distinct points.
 Let $w_1,w_2$ be any two distinct points in $\Omega$. Since $\eta_{\Omega}(w_1,w_2)$ is the infimum of the Hurwitz length of all rectifiable curves joining $w_1$ and $w_2$ in $\Omega$, for any $\epsilon>0$ there exists a rectifiable path $\gamma$ such that
 \begin{equation*}
 \eta_{\Omega}(w_1,w_2)\geq\int_{\gamma}\eta_{\Omega}(w)|dw|-\epsilon.
 \end{equation*}
 Note that in a hyperbolic domain $\Omega$, the inequality $\eta_{\Omega}\geq\lambda_{\Omega}$ is
 well-known; see \cite{Min16}. Then we have
 \begin{equation*}
  \eta_{\Omega}(w_1,w_2)\geq\int_{\gamma}\lambda_{\Omega}(w)|dw|-\epsilon
  \geq\lambda_{\Omega}(w_1,w_2)-\epsilon.
 \end{equation*}
 Letting $\epsilon\to 0$, we obtain
 \begin{equation}\label{eqn1}
 \eta_{\Omega}(w_1,w_2)\geq\lambda_{\Omega}(w_1,w_2)>0.
 \end{equation}

To prove the completeness, we use the following fact (see \cite[Theorem~2.5.28, p.~52]{Bur01}):
{\em a locally compact length $($metric$)$ space $X$ is complete if and only if every closed disc in $X$ is compact $($see also \cite[p.~28]{Bur01}$)$}. Because $\lambda_{\Omega}$ is complete, each closed hyperbolic disk $\overline{D}_{\lambda_{\Omega}}(a,r)=\{w\in\Omega:\,\lambda_{\Omega}(a,w)\leq r\}$ is compact. Because $\lambda_{\Omega}\leq\eta_{\Omega},~\overline{D}_{\eta_{\Omega}}(a,r)\subset\overline{D}_{\lambda_{\Omega}}(a,r)$. A closed subset of a compact set is compact, so $\overline{D}_{\eta_{\Omega}}(a,r)$ is compact.
\end{proof}

The following remark assures that
there exists a non-hyperbolic domain for which Theorem~\ref{thm1} still satisfies.

\begin{remark}
Let $\Omega=\mathbb{C}\setminus\{0\}$. Then, the Hurwitz density has the elementary formula 
$\eta_{\Omega}(w)=1/8|w|.$ This is nothing but
a scalar multiplication of the classical 
quasihyperbolic metric of $\Omega$.
The completeness property now follows from the fact that the quasihyperbolic metric space is complete.
\end{remark}

We know that the holomorphic functions are global as well as infinitesimal contraction functions with respect to the hyperbolic metric. In analogy to this we now prove that the one-to-one holomorphic functions are global contraction functions for the Hurwitz metric as well. 

\begin{proposition}\label{prop1}
Let $\Omega$ and $Y$ be proper subdomains of $\mathbb{C}$ and $h$ be an injective holomorphic function from $\Omega$ to $Y.$ Then we have the inequality
$$
\eta_Y(h(w_1,w_2))\leq\eta_{\Omega}(w_1,w_2)
$$
for all $w_1,w_2$ in $\Omega.$ Equality holds in the above inequality if $h$ is an conformal homeomorphism.
\end{proposition}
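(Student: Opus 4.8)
The plan is to deduce the global (integrated) inequality from the infinitesimal distance-decreasing property recorded in Theorem~A. First I would note that since $h$ is injective, for every point $a\in\Omega$ the hypotheses of Theorem~A are automatically satisfied with $b=h(a)$: indeed $h(w)\neq h(a)$ for all $w\in\Omega\setminus\{a\}$, and $h(\Omega)\subseteq Y$ so $\eta_Y$ makes sense at $h(a)$. Hence Theorem~A yields the pointwise estimate
$$
\eta_Y(h(a))\,|h'(a)|\le\eta_{\Omega}(a)\qquad\text{for every }a\in\Omega.
$$

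Next I would transport paths. Fix $w_1,w_2\in\Omega$ and let $\gamma\colon[0,1]\to\Omega$ be any rectifiable path from $w_1$ to $w_2$. Since $h$ is holomorphic and the trace $\gamma([0,1])$ is compact, $|h'|$ is bounded on a neighbourhood of $\gamma([0,1])$, so $h\circ\gamma$ is again a rectifiable path, now in $Y$, joining $h(w_1)$ and $h(w_2)$. Using the definition of the Hurwitz distance as an infimum of Hurwitz lengths, the change of variables $\zeta=h(w)$, and the pointwise estimate above,
$$
\eta_Y(h(w_1),h(w_2))\le\int_{h\circ\gamma}\eta_Y(\zeta)\,|d\zeta|
=\int_{\gamma}\eta_Y(h(w))\,|h'(w)|\,|dw|
\le\int_{\gamma}\eta_{\Omega}(w)\,|dw|.
$$
Taking the infimum over all rectifiable $\gamma$ in $\Omega$ joining $w_1$ and $w_2$ gives $\eta_Y(h(w_1),h(w_2))\le\eta_{\Omega}(w_1,w_2)$.

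For the equality statement, if $h$ is a conformal homeomorphism of $\Omega$ onto $Y$, then $h^{-1}\colon Y\to\Omega$ is also an injective holomorphic map, so the inequality just established applies to $h^{-1}$ as well and yields $\eta_{\Omega}(w_1,w_2)=\eta_{\Omega}\big(h^{-1}(h(w_1)),h^{-1}(h(w_2))\big)\le\eta_Y(h(w_1),h(w_2))$; combined with the forward inequality this forces equality. (Alternatively, one may invoke the equality clause of Theorem~A directly, since a conformal homeomorphism is in particular a covering of $\Omega\setminus\{a\}$ onto $Y\setminus\{h(a)\}$ that extends holomorphically with non-vanishing derivative.)

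The only genuinely technical point is the path-transport step: one must verify that the image of a rectifiable curve under the holomorphic map $h$ remains rectifiable and that the length integral transforms correctly under the substitution $\zeta=h(w)$. This is routine once one uses that $|h'|$ is bounded on the compact trace of $\gamma$, but it is the place where the argument needs a little care; everything else follows immediately from Theorem~A and Definition~\ref{def2}.
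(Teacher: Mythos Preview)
Your proof is correct and follows essentially the same route as the paper: apply Theorem~A pointwise (using injectivity of $h$), push a path $\gamma$ forward to $h\circ\gamma$, bound the Hurwitz length via the change of variables, and pass to the infimum. The only cosmetic differences are that the paper uses an $\epsilon$-argument in place of your direct infimum, and that you spell out the equality case (via $h^{-1}$) and the rectifiability of $h\circ\gamma$, both of which the paper leaves implicit.
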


\begin{proof}
By definition of $\eta_\Omega(w_1,w_2)$, for any $\epsilon>0$ there exists a path $\gamma$ joining $w_1$ and $w_2$
in $\Omega$ such that 
$$
\int_{\gamma}\eta_{\Omega}(w)|dw|\le \eta_{\Omega}(w_1,w_2)+\epsilon.
$$
By Definition $\ref{def2},$ it follows clearly that
\begin{equation}\label{eq2}
\eta_{Y}(h(w_1,w_2))\leq\int_{h(\gamma)}\eta_Y(z)|dz|=
\int_{\gamma}\eta_Y(h(w))|h'(w)||dw|.
\end{equation}
Since $h$ is one-to-one holomorphic function, by Theorem~A, we have
\begin{equation}\label{eq3}
\eta_Y(h(w))|h'(w)|\leq\eta_{\Omega}(w)
\end{equation}
for every $w$ in $\Omega.$ 
Combining $\eqref{eq2}$ and $\eqref{eq3},$ we obtain
$$
\eta_{Y}(h(w_1,w_2))\leq\int_{\gamma}\eta_{\Omega}(w)|dw|\le \eta_{\Omega}(w_1,w_2)+\epsilon.
$$
Letting $\epsilon\to 0$, we conclude what we wanted to prove.
\end{proof}

\section{The generalized Hurwitz Metric}\label{sec4}
\setcounter{equation}{0}

In Section \ref{sec3} we discussed the alternate definition of the Hurwitz density. By adopting the idea of generalized Kobayashi density we are going to define and study the generalized Hurwitz density in this section. 
The distance decreasing property of the Hurwitz density implies that 
for any holomorphic function $h$ from $\mathbb{D}$ to $\Omega$ with 
$h(s)=w,~ h(t)\neq w$ for all $t$ in $\mathbb{D}\setminus\{s\}$ and 
$h'(s)\neq 0,$ we have the inequalities
\[ \eta_{\Omega}(h(s))|h'(s)| \leq\eta_{\mathbb{D}}(s),\]
and
\[ \eta_{\Omega}(h(s)) \leq\cfrac{\eta_{\mathbb{D}}(s)}{|h'(s)|}. \]
Since the formula $\eqref{eq1}$ provides an existence of a holomorphic function $h$ for which the equality holds, we have
$$
\eta_{\Omega}(w)=\inf\cfrac{\eta_{\mathbb{D}}(s)}{|h'(s)|},
$$
where the infimum is taken over all holomorphic functions $h$ from $\mathbb{D}$ to $\Omega$ with $h(s)=w,~h(t)\neq w$ for all $t\in\mathbb{D}\setminus\{s\},~h'(s)\neq 0.$ This leads to the notion of introducing generalized Hurwitz density for an arbitrary domain $\Omega$.

\begin{definition}\label{def3}
For any domain $\Omega\subset\mathbb{C},$ the {\em generalized Hurwitz density} is defined as 
\begin{equation*}
\eta_{\Omega}^{\mathbb{D}}(w)=\inf\cfrac{\eta_{\mathbb{D}}(s)}{|h'(s)|},
\end{equation*} 
where the infimum is taken over all $h\in\mathcal{H}(\mathbb{D},\Omega)$ with 
$h(s)=w,~h(t)\neq w$ for all $t\in\mathbb{D}\setminus\{s\},~h'(s)\neq 0,$ and all $s$ in $\mathbb{D}.$
\end{definition}

\begin{remark}
If $\Omega\subsetneq\mathbb{C}$ and $a\in\Omega,$ then there exists a Hurwitz covering map $g$ from $\mathbb{D}$ to $\Omega$ which realizes the infimum; thus $\eta_{\Omega}(a)=\eta_{\Omega}^{\mathbb{D}}(a)$ for all $a\in\Omega.$   
\end{remark}

Note that, in Definition $\ref{def3}$ it is not required to choose the domain $\Omega$ to be a proper subdomain of the complex plane $\mathbb{C}$. In the following theorem, we calculate $\eta_{\Omega}^{\mathbb{D}},$ when $\Omega=\mathbb{C}.$    

\begin{theorem}\label{thm2}
Suppose that $\Omega$ is the whole complex plane, then the generalized Hurwitz density $\eta_{\Omega}^{\mathbb{D}}(w)$ is identically equal to zero for all elements $w$ in $\Omega.$
\end{theorem}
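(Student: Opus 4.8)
The plan is to exhibit, for every point $w\in\mathbb{C}$ and every $\varepsilon>0$, a single holomorphic function $h\colon\mathbb{D}\to\mathbb{C}$ satisfying the admissibility conditions in Definition~\ref{def3} (namely $h(s)=w$ for some $s\in\mathbb{D}$, $h(t)\neq w$ for $t\in\mathbb{D}\setminus\{s\}$, and $h'(s)\neq 0$) with $|h'(s)|$ arbitrarily large; since $\eta_{\mathbb{D}}(s)$ is bounded below as $s$ ranges over a fixed compact subset of $\mathbb{D}$, this forces the infimum defining $\eta_{\mathbb{C}}^{\mathbb{D}}(w)$ to be $0$. By a preliminary translation we may assume $w=0$, and it suffices to work at the base point $s=0$, where $\eta_{\mathbb{D}}(0)=2$.

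The key step is the construction of the test functions. A natural choice is $h_n(z)=nz$ for $n\in\mathbb{N}$: each $h_n$ is entire, hence holomorphic on $\mathbb{D}$ with range in $\mathbb{C}=\Omega$; we have $h_n(0)=0$; the only zero of $h_n$ in $\mathbb{D}$ is at $z=0$, so $h_n(t)\neq 0$ for $t\in\mathbb{D}\setminus\{0\}$; and $h_n'(0)=n\neq 0$. Therefore each $h_n$ is admissible in the infimum, and
\begin{equation*}
\eta_{\mathbb{C}}^{\mathbb{D}}(0)\leq\frac{\eta_{\mathbb{D}}(0)}{|h_n'(0)|}=\frac{2}{n}.
\end{equation*}
Letting $n\to\infty$ gives $\eta_{\mathbb{C}}^{\mathbb{D}}(0)\leq 0$; since the density is a nonnegative quantity (it is an infimum of ratios of positive numbers), $\eta_{\mathbb{C}}^{\mathbb{D}}(0)=0$. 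Translating back, $\eta_{\mathbb{C}}^{\mathbb{D}}(w)=0$ for every $w\in\mathbb{C}$.

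I do not anticipate a serious obstacle here; the only point requiring a little care is verifying that the functions $h_n$ genuinely lie in the admissible family, in particular the injectivity-at-$w$ condition $h_n(t)\neq w$ for $t\neq s$, which for the linear maps $h_n(z)=nz$ is immediate because a nonzero linear map vanishes only at the origin. One could equally well avoid the translation step by using $h_n(z)=nz+w$ directly. It may also be worth remarking, as a sanity check consistent with the surrounding discussion, that this shows the hypothesis $\Omega\subsetneq\mathbb{C}$ is essential for the Hurwitz density: the classical Hurwitz extremal problem has no nonconstant competitors issue on $\mathbb{C}$, and the generalized density degenerates precisely because $\mathbb{C}$ admits arbitrarily expanding self-maps fixing a point with the required local univalence.
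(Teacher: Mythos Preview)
Your proof is correct and is essentially the same as the paper's: the paper uses the test functions $h_n(s)=(s-t)n+w$ at an arbitrary base point $t\in\mathbb{D}$, which is exactly your alternative $h_n(z)=nz+w$ when $t=0$, and concludes identically by letting $n\to\infty$. The only cosmetic difference is that you first reduce to $w=0$ by translation, which, as you already note, is unnecessary.
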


\begin{proof}
Let $w\in\Omega$ be an arbitrary element and $n$ be any positive integer. Setting $h_n(s)=(s-t)n+w.$ Clearly, $h_n$ is a sequence of holomorphic functions from $\mathbb{D}$ into $\mathbb{C}$ with $h_n(t)=w$ and $h_n(s)\neq w$ for all $s\in\mathbb{D}\setminus\{t\}.$ By Definition $\ref{def3},$ we have 
\begin{equation*}
\eta_{\Omega}^\mathbb{D}(w)\leq\cfrac{\eta_\mathbb{D}(t)}{|h'_n(t)|}.
\end{equation*}
Since the Hurwitz and the hyperbolic densities coincide on simply connected domains, we have
\begin{equation*}
\eta_{\Omega}^{\mathbb{D}}(w)\leq\cfrac{\lambda_{\mathbb{D}}(t)}{n}.
\end{equation*}
Letting $n$ goes to infinity, we obtain that $\eta_{\Omega}^{\mathbb{D}}(w)=0.$
\end{proof}	

The definition of the generalized Hurwitz density can further be generalized by changing the fixed domain $\mathbb{D}$ to an arbitrary proper subdomain $Y$ of $\mathbb{C},$ that is, by pushing forward the Hurwitz density on $Y$ to $\Omega$ by a holomorphic function having some special property. Here, we call $Y$ as the {\em basepoint domain}. This idea leads to the following definition.

\begin{definition}\label{def4}
Let $\Omega\subset \mathbb{C}$ be arbitrary. For
all $s\in Y$,
the {\em generalized Hurwitz density} $\eta_{\Omega}^Y$ for the basepoint domain $Y$ is defined as
$$
\eta_{\Omega}^Y(w)=\inf\cfrac{\eta_Y(s)}{|h'(s)|},
$$
where $\eta_Y$ is the Hurwitz density on $Y$ and the infimum is taken over all holomorphic functions $h$ from $Y$ to $\Omega$ with $h(s)=w,~h(t)\neq w$ for all $t\in Y\setminus\{s\},~h'(s)\neq0$. 
\end{definition}

In view of the nature of Definition~\ref{def4}, it is here appropriate to remark that $\eta_{\Omega}^Y$ can be $+\infty$ at some points, or even at every point.

We will now prove some expected elementary properties of $\eta_{\Omega}^Y.$ We start by comparing the Hurwitz and the generalized Hurwitz densities on proper subdomains of the complex plane.

\begin{proposition}\label{prop2}
Let $Y\subset \mathbb{C}$ be a domain and $\Omega$ be a proper subdomain of $\mathbb{C}$. Then for every point $w$ in $\Omega,$ we have
\begin{equation*}
\eta_{\Omega}^Y(w)\geq\eta_{\Omega}(w).
\end{equation*}
\end{proposition}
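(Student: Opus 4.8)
The plan is to prove the stronger pointwise fact that \emph{every} competitor appearing in the infimum of Definition~\ref{def4} is already bounded below by $\eta_{\Omega}(w)$; the asserted inequality for the infimum then follows immediately. Fix $w\in\Omega$. If there is no holomorphic $h\colon Y\to\Omega$ meeting the three constraints of Definition~\ref{def4} (namely $h(s)=w$ for some $s\in Y$, $h(t)\neq w$ for all $t\in Y\setminus\{s\}$, and $h'(s)\neq 0$), then $\eta_{\Omega}^{Y}(w)=+\infty$ by the convention recorded just after Definition~\ref{def4}, and there is nothing to prove.

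Now suppose $h\colon Y\to\Omega$ is such a competitor, with associated basepoint $s\in Y$. The key observation is that the constraints defining the competitor class are precisely the hypotheses of Theorem~A: $h$ is holomorphic from $Y$ into the proper subdomain $\Omega$, it sends $s$ to $w$, and it omits the value $w$ on $Y\setminus\{s\}$. Applying Theorem~A with $a=s$ and $b=w=h(s)$ therefore yields
$$
\eta_{\Omega}(w)\,|h'(s)|=\eta_{\Omega}(h(s))\,|h'(s)|\le\eta_{Y}(s),
$$
that is, $\eta_{Y}(s)/|h'(s)|\ge\eta_{\Omega}(w)$. Since this holds for every admissible pair $(h,s)$, taking the infimum over all of them gives $\eta_{\Omega}^{Y}(w)\ge\eta_{\Omega}(w)$, as required.

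The only point needing attention is a bookkeeping one: Theorem~A is stated for proper subdomains of $\mathbb{C}$, so the step above tacitly uses that $Y$ is a proper subdomain, which is in any case implicit in speaking of the Hurwitz density $\eta_{Y}$. The degenerate case $Y=\mathbb{C}$ causes no difficulty, since a nonconstant entire function cannot have its image contained in a proper subdomain of $\mathbb{C}$ (so a constant competitor would force $h'(s)=0$), whence the competitor class is empty and the inequality holds vacuously. Apart from this, the proof is a single invocation of the distance-decreasing property of the Hurwitz density followed by passing to an infimum, so no genuine obstacle arises.
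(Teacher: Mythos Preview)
Your core argument is correct and is exactly the paper's proof: apply Theorem~A to each competitor $h$ to get $\eta_{Y}(s)/|h'(s)|\ge\eta_{\Omega}(w)$, then take the infimum. Your extra care with the empty competitor class is a harmless addition.

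One factual slip, however: in your final paragraph you assert that ``a nonconstant entire function cannot have its image contained in a proper subdomain of $\mathbb{C}$.'' This is false; for example $e^{z}$ is entire with image $\mathbb{C}\setminus\{0\}$, and more generally Picard's little theorem allows an entire function to omit one value. So for $Y=\mathbb{C}$ the competitor class need not be empty. This does not damage the proof, since (as you yourself note) the quantity $\eta_{Y}$ in Definition~\ref{def4} is only defined when $Y\subsetneq\mathbb{C}$, and hence the case $Y=\mathbb{C}$ is outside the scope of the statement anyway; but the sentence as written should be deleted or corrected.
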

\begin{proof}
Let $a\in Y$ and $h$ be any holomorphic function from $Y$ to $\Omega$ with
$h(a)=b,~h(s)\neq b$ for all $s\in Y\setminus\{a\}$ and $h'(a)\neq 0.$ Then by distance decreasing property of the Hurwitz density, we have
\[ \eta_{\Omega}(h(a))|h'(a)| \leq\eta_Y(a), \]
and
\[ \eta_{\Omega}(b) \leq\cfrac{\eta_Y(a)}{|h'(a)|}.\]
Taking the infimum on both sides over $h\in\mathcal{H}(Y,\Omega)$ with $h(a)=b,~h(s)\neq b$ for all $s\in Y\setminus\{a\},~h'(a)\neq 0$, we have
$$
\eta_{\Omega}^Y(b)\geq\eta_{\Omega}(b).
$$
Since $b\in\Omega$ is arbitrary, the above inequality holds true for every $b\in\Omega.$
\end{proof}

One naturally asks the comparison between the classical generalized Kobayashi density and the generalized Hurwitz density. Recall the definition of the generalized Kobayashi density.

\begin{definition}
Let $\Omega$ be a domain in the complex plane. For every $w\in \Omega$, the {\em generalized Kobayashi density} is
given by 
$$
\kappa_\Omega^Y(w)=\inf \frac{\lambda_Y(t)}{|f'(t)|},
$$
where $\lambda_Y$ is the hyperbolic density on a hyperbolic domain $Y\subset\mathbb{C}$ and the infimum is taken over all $f\in\mathcal{H}(Y,\Omega)$ and all points $t\in Y$ such that $f(t)=w$.
\end{definition}

We immediately have
\begin{corollary}
If $Y$ and $\Omega$ are hyperbolic domains, then $\eta_{\Omega}^Y\geq\kappa_{\Omega}^Y$.
\end{corollary}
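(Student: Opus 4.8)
The plan is to compare the two infimums termwise using the fact, established earlier in the excerpt (Theorem~\ref{thm1} and the standard inequality recalled in its proof), that $\eta_Y \geq \lambda_Y$ on every hyperbolic domain $Y$. Fix a point $w \in \Omega$. Both $\eta_{\Omega}^Y(w)$ and $\kappa_{\Omega}^Y(w)$ are infimums taken over essentially the same competing data: a point $t \in Y$ and a holomorphic map $f \in \mathcal{H}(Y,\Omega)$ with $f(t) = w$ and $f'(t) \neq 0$. (The generalized Hurwitz density imposes the extra requirement that $f(s) \neq w$ for all $s \in Y \setminus \{t\}$, i.e. $t$ is the unique preimage of $w$; this restricts the competing family for $\eta_{\Omega}^Y$, so it can only make that infimum \emph{larger}, which is consistent with the direction we want.)

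First I would take an arbitrary admissible pair $(t, f)$ for $\eta_{\Omega}^Y(w)$, so $f \in \mathcal{H}(Y,\Omega)$, $f(t) = w$, $f(s) \neq w$ for $s \neq t$, and $f'(t) \neq 0$. Since $Y$ is hyperbolic we have $\eta_Y(t) \geq \lambda_Y(t)$, hence
\[
\frac{\eta_Y(t)}{|f'(t)|} \geq \frac{\lambda_Y(t)}{|f'(t)|} \geq \kappa_{\Omega}^Y(w),
\]
where the last inequality is just the definition of $\kappa_{\Omega}^Y(w)$ as an infimum over a family that \emph{contains} the pair $(t,f)$ (the Kobayashi density places no injectivity-type restriction on $f$ near $t$, so every Hurwitz-admissible pair is Kobayashi-admissible). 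Taking the infimum over all Hurwitz-admissible pairs $(t,f)$ on the left-hand side yields $\eta_{\Omega}^Y(w) \geq \kappa_{\Omega}^Y(w)$, and since $w \in \Omega$ was arbitrary this is the claimed inequality $\eta_{\Omega}^Y \geq \kappa_{\Omega}^Y$.

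The argument is short and essentially formal once the right inclusion of competing families is identified, so I do not anticipate a genuine obstacle; the only point requiring a little care is checking that the admissibility conditions for the Hurwitz infimum are strictly stronger than those for the Kobayashi infimum (uniqueness of preimage plus $f'(t) \neq 0$ versus no such conditions), so that the inequality goes in the stated direction rather than the reverse — this is exactly why the extra hypothesis $\eta_Y \geq \lambda_Y$, valid precisely because $Y$ is assumed hyperbolic, is what drives the estimate. One could also phrase the whole thing as an immediate consequence of Proposition~\ref{prop2}-style reasoning combined with the classical fact that the generalized Kobayashi density is dominated by the hyperbolic density, but the direct termwise comparison above is cleanest.
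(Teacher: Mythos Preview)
Your main argument is correct and is exactly the intended one: the paper provides no explicit proof, writing only ``We immediately have,'' and the direct termwise comparison you give --- $\eta_Y \geq \lambda_Y$ on the hyperbolic domain $Y$, together with the observation that every Hurwitz-admissible pair $(t,f)$ is Kobayashi-admissible --- is the natural (and only obvious) way to read that remark.

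One small slip in your closing aside: the alternative route via Proposition~\ref{prop2} does \emph{not} work as you describe it, because the generalized Kobayashi density is \emph{not} dominated by the hyperbolic density --- the inequality goes the other way ($\kappa_\Omega^Y \geq \lambda_\Omega$, as recalled in the Introduction). Hence the chain $\eta_\Omega^Y \geq \eta_\Omega \geq \lambda_\Omega \geq \kappa_\Omega^Y$ fails at the last step. This does not affect your actual proof, which stands on its own.
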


\begin{corollary}
For proper subdomains $\Omega$ and $Y$ of $\mathbb{C}$, we have $\eta_{\Omega}^Y(w)\geq 0$ for all $w$ in $\Omega.$
\end{corollary}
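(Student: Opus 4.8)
The plan is to obtain this at once from Proposition~\ref{prop2} combined with the strict positivity of the Hurwitz density. First I would recall from the discussion preceding Definition~\ref{def1} that for a proper subdomain $\Omega\subsetneq\mathbb{C}$ and a point $w\in\Omega$ one has $\eta_{\Omega}(w)=2/r_{\Omega}(w)$ with $r_{\Omega}(w)=\max\{h'(0):h\in\mathcal{H}(w,\Omega)\}$; since the family $\mathcal{H}(w,\Omega)$ is nonempty (it contains the Hurwitz covering of $(\mathbb{D},0)$ onto $(\Omega,w)$, which has positive derivative at $0$), the extremal value $r_{\Omega}(w)$ is finite and strictly positive, so $\eta_{\Omega}(w)>0$.

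Next, because $\Omega$ is a proper subdomain of $\mathbb{C}$ and $Y\subset\mathbb{C}$ is a domain, Proposition~\ref{prop2} applies and yields $\eta_{\Omega}^Y(w)\geq\eta_{\Omega}(w)$ for every $w\in\Omega$. Combining the two statements gives $\eta_{\Omega}^Y(w)\geq\eta_{\Omega}(w)>0\geq 0$, as claimed. Alternatively, one can argue directly from Definition~\ref{def4}: if no admissible holomorphic map $h:Y\to\Omega$ exists for the point $w$, then $\eta_{\Omega}^Y(w)=+\infty\geq 0$ by convention, and otherwise $\eta_{\Omega}^Y(w)$ is an infimum of the quantities $\eta_Y(s)/|h'(s)|$, each of which is nonnegative since $\eta_Y>0$ on $Y$ and $|h'(s)|\neq 0$; hence the infimum is $\geq 0$.

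There is no real obstacle in this corollary; it is a bookkeeping consequence of results already proved. The only point deserving a moment's care is the positivity of $\eta_Y$ on $Y$, which itself reduces to the nonvanishing of the extremal derivative in the Hurwitz extremal problem, a fact recorded in Section~\ref{sec1}.
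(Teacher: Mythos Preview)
Your proposal is correct and matches the paper's intended argument: the corollary is placed immediately after Proposition~\ref{prop2} with no separate proof, so it is meant to follow at once from $\eta_{\Omega}^Y\geq\eta_{\Omega}$ together with the strict positivity of the Hurwitz density on proper subdomains. Your alternative direct argument from Definition~\ref{def4} is also valid and slightly more self-contained, but the paper evidently regards Proposition~\ref{prop2} as the source of the corollary.
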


There are certain situations where the classical generalized Kobayashi density  agrees with the hyperbolic density, see for instance \cite{Kee07}.
This motivates us to investigate the situations under which the generalized Hurwitz density coincides with the Hurwitz density. The following proposition justifies one such case and a few more situations will be covered in the next section.  

\begin{proposition}\label{prop3}
Let $\Omega$ and $Y$ be proper subdomains of $\mathbb{C}.$ If for every $b\in\Omega,$ there exists a holomorphic covering map $h_b$ from $Y\setminus\{a\}$ onto $\Omega\setminus\{b\}$ that extends to a holomorphic map of $Y$ onto $\Omega$ with $h_b(a)=b$ and $h'_b(a)\neq 0,$ then
$$
\eta_{\Omega}^Y(w)=\eta_{\Omega}(w)
$$
for all $w$ in $\Omega.$
In particular, we also have
$$
\eta_{\Omega}^{\Omega}(w)=\eta_{\Omega}(w)
$$
for every $w$ in $\Omega.$
\end{proposition}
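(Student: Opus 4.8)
The plan is to get the inequality $\eta_\Omega^Y(w)\le\eta_\Omega(w)$ from the hypothesis, then combine it with Proposition~\ref{prop2} (which already gives $\eta_\Omega^Y(w)\ge\eta_\Omega(w)$) to conclude equality. For the upper bound, fix $w\in\Omega$. By hypothesis, applied with $b=w$, there is a holomorphic covering map $h_w\colon Y\setminus\{a\}\to\Omega\setminus\{w\}$ that extends holomorphically to $Y\to\Omega$ with $h_w(a)=w$ and $h_w'(a)\neq 0$. In particular $h_w$ is an admissible competitor in the infimum defining $\eta_\Omega^Y(w)$: it is holomorphic from $Y$ to $\Omega$, sends $a$ to $w$, sends no other point of $Y$ to $w$ (because on $Y\setminus\{a\}$ it takes values in $\Omega\setminus\{w\}$), and has nonvanishing derivative at $a$. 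Hence $\eta_\Omega^Y(w)\le \eta_Y(a)/|h_w'(a)|$.

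The next step is to evaluate $\eta_Y(a)/|h_w'(a)|$. Here I would invoke the equality case of Theorem~A (the distance decreasing property of the Hurwitz density): since $h_w$ is precisely a covering of $Y\setminus\{a\}$ onto $\Omega\setminus\{w\}$ extending to a holomorphic map of $Y$ onto $\Omega$ with $h_w(a)=w$ and $h_w'(a)\neq 0$, the equality $\eta_\Omega(w)|h_w'(a)| = \eta_Y(a)$ holds. Therefore $\eta_Y(a)/|h_w'(a)| = \eta_\Omega(w)$, and combining with the previous paragraph gives $\eta_\Omega^Y(w)\le\eta_\Omega(w)$. Together with Proposition~\ref{prop2} this yields $\eta_\Omega^Y(w)=\eta_\Omega(w)$ for all $w\in\Omega$.

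For the ``in particular'' statement with $Y=\Omega$, I would simply check that $\Omega$ itself satisfies the hypothesis: for each $b\in\Omega$, the Hurwitz covering $G_b\colon\mathbb{D}\to\Omega$ composed appropriately — or more directly, the Hurwitz covering of $(\Omega,b)$ in the sense described in Section~\ref{sec1} (the covering $\mathbb{D}\setminus\{0\}\to\Omega\setminus\{b\}$ has an analogue here) — but cleanest is to note that the \emph{identity} map does not work since it is not a covering of $\Omega\setminus\{b\}$; instead one uses that there is a Hurwitz covering map from $\Omega\setminus\{b\}$ onto itself, namely the map realizing $\eta_\Omega(b)$ in the alternate Definition~\ref{def1}, which by construction is a covering $\Omega\setminus\{b\}\to\Omega\setminus\{b\}$ extending holomorphically with nonzero derivative at the preimage of $b$. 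So the hypothesis holds with $a$ the relevant basepoint, and the general case applies.

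The main obstacle I anticipate is the ``in particular'' clause: one must be careful that the Hurwitz covering construction genuinely produces, for the basepoint domain $Y=\Omega$, a self-covering of $\Omega\setminus\{b\}$ with the stated extension and derivative properties, rather than tacitly assuming the identity map qualifies. Everything else is a direct application of Theorem~A's equality case and Proposition~\ref{prop2}, so the real content is verifying that admissibility-plus-equality argument and handling the $Y=\Omega$ specialization correctly.
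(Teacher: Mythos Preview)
Your argument for the general case is correct and matches the paper's proof exactly: both use the hypothesis to exhibit $h_b$ as an admissible competitor in the infimum defining $\eta_\Omega^Y(b)$, invoke the equality case of Theorem~A to identify $\eta_Y(a)/|h_b'(a)|=\eta_\Omega(b)$, and then appeal to Proposition~\ref{prop2} for the reverse inequality.

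The problem is in your handling of the ``in particular'' clause. You assert that the identity map does not work because it is not a covering of $\Omega\setminus\{b\}$, but this is mistaken: the identity $\mathrm{id}\colon\Omega\setminus\{b\}\to\Omega\setminus\{b\}$ is a homeomorphism, hence trivially a (one-sheeted) holomorphic covering map. Taking $a=b$ and $h_b=\mathrm{id}_\Omega$ immediately verifies the hypothesis for $Y=\Omega$, with $h_b(b)=b$ and $h_b'(b)=1\neq 0$. Your proposed substitute---a ``Hurwitz covering map from $\Omega\setminus\{b\}$ onto itself'' extracted from Definition~\ref{def1}---does not exist as described: the map $g=h\circ T$ in that definition goes from $\mathbb{D}$ to $\Omega$, not from $\Omega$ to itself, so there is no self-covering of $\Omega\setminus\{b\}$ to be read off from it. The paper does not spell out the $Y=\Omega$ case separately, presumably because the identity map makes the verification immediate; once you accept that the identity qualifies, the specialization is automatic.
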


\begin{proof}
Since $h_b$ is a holomorphic map from $Y$ to $\Omega$ with $h_b(a)=b,~h_b(w)\neq b$ for all $w$ in $Y\setminus\{a\}$ and $h'_b(a)\neq 0$, by the definition of generalized Hurwitz density, we have
\begin{equation}\label{eq4}
\eta_{\Omega}^Y(b)\leq\cfrac{\eta_Y(a)}{|h'_b(a)|.}
\end{equation}
In addition, by Theorem~A, we obtain
\begin{equation}\label{eq5}
\eta_{\Omega}(b)|h'_b(a)|=\eta_Y(a).
\end{equation}
Combining $\eqref{eq4}$ and $\eqref{eq5},$ we obtain
$$
\eta_{\Omega}^Y(b)\leq\eta_{\Omega}(b).
$$
Since $b$ is an arbitrary point, it follows that $\eta_{\Omega}^Y(w)\leq\eta_{\Omega}(w)$ for all $w\in\Omega$. On the other hand, by Proposition $\ref{prop2}$ it follows that $\eta_{\Omega}^Y(w)\geq\eta_{\Omega}(w).$ Hence the proof is complete. 
\end{proof}

Proposition~\ref{prop3} is stronger, because 
for non-simply connected domains $Y$ and $\Omega$ the proposition
certainly holds (see for instance Example~\ref{exp3.18}). To demonstrate this, we use 
the distance decreasing property of the generalized Hurwitz density, which is proved below (see Theorem~\ref{thm3}).
However, for simply connected domains we have the following special situation. 

\begin{corollary}
If $Y\subsetneq \mathbb{C}$ is a simply connected domain and $\Omega\subsetneq\mathbb{C}$ is any domain, then 
$$
\eta_{\Omega}^\mathbb{D}\equiv\eta_{\Omega}^Y\equiv\eta_{\Omega}.
$$
\end{corollary}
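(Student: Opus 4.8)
The plan is to deduce the chain of equalities from Proposition~\ref{prop3} together with the Riemann Mapping Theorem. The key observation is that when $Y\subsetneq\mathbb C$ is simply connected, the hypothesis of Proposition~\ref{prop3} is automatically met for the pair $(Y,Y)$, since the identity map on $Y$ restricts to a covering of $Y\setminus\{a\}$ onto $Y\setminus\{a\}$ that extends to the identity on $Y$; hence $\eta_Y^Y\equiv\eta_Y$, but more importantly we want to compare $\eta_\Omega^Y$ with $\eta_\Omega^{\mathbb D}$. The natural route is conformal invariance: let $\varphi:\mathbb D\to Y$ be a Riemann map. Then $\varphi$ is a conformal homeomorphism, so for any domain $\Omega$ and any holomorphic $h:Y\to\Omega$ with the Hurwitz-type normalization ($h(s)=w$, $h(t)\neq w$ for $t\neq s$, $h'(s)\neq0$), the composition $h\circ\varphi:\mathbb D\to\Omega$ satisfies the analogous normalization at the point $\varphi^{-1}(s)$, and conversely every competitor on $\mathbb D$ pulls back from one on $Y$.

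First I would record that, since the Hurwitz density is conformally invariant on simply connected (indeed proper) domains and agrees there with the hyperbolic density, we have $\eta_Y(s)=\eta_{\mathbb D}(\varphi^{-1}(s))\,/\,|\varphi'(\varphi^{-1}(s))|$ for $s\in Y$ — this is just Definition~\ref{def1} applied with the Hurwitz covering being the Riemann map itself. Next, for a competitor $h\in\mathcal H(Y,\Omega)$ realizing (approximately) the infimum defining $\eta_\Omega^Y(w)$, set $g=h\circ\varphi$. The chain rule gives $|g'(\varphi^{-1}(s))|=|h'(s)|\,|\varphi'(\varphi^{-1}(s))|$, and therefore
$$
\frac{\eta_{\mathbb D}(\varphi^{-1}(s))}{|g'(\varphi^{-1}(s))|}
=\frac{\eta_{\mathbb D}(\varphi^{-1}(s))}{|h'(s)|\,|\varphi'(\varphi^{-1}(s))|}
=\frac{\eta_Y(s)}{|h'(s)|}.
$$
Taking infima over all such $h$ (equivalently, over all such $g$, since $h\mapsto h\circ\varphi$ is a bijection between the two competitor families preserving the normalization) yields $\eta_\Omega^{\mathbb D}(w)=\eta_\Omega^Y(w)$ for every $w\in\Omega$. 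This already gives $\eta_\Omega^{\mathbb D}\equiv\eta_\Omega^Y$.

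It remains to identify this common value with $\eta_\Omega$. If $\Omega\subsetneq\mathbb C$ is proper, the cleanest argument is to invoke Proposition~\ref{prop3} with basepoint domain $Y=\mathbb D$: for each $b\in\Omega$ the Hurwitz covering $G_b:\mathbb D\to\Omega$ is, by definition, a covering of $\mathbb D\setminus\{0\}$ onto $\Omega\setminus\{b\}$ that extends holomorphically with $G_b(0)=b$ and $G_b'(0)>0\neq0$; so the hypothesis of Proposition~\ref{prop3} holds and $\eta_\Omega^{\mathbb D}\equiv\eta_\Omega$. Combined with the previous paragraph this gives the full chain $\eta_\Omega^{\mathbb D}\equiv\eta_\Omega^Y\equiv\eta_\Omega$. (One should also remark that the statement is vacuous or degenerate unless $\Omega$ is hyperbolic, but the equalities hold as stated: if $\Omega=\mathbb C$ then by Theorem~\ref{thm2} all three are $0$; if $\Omega\subsetneq\mathbb C$ the above applies.)

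The only mildly delicate point — and the one I would write out carefully — is the bijection between competitor families under $h\mapsto h\circ\varphi$: one must check that $h$ is injective near $s$ on the preimage (i.e.\ $h(t)\neq w$ for $t\in Y\setminus\{s\}$) if and only if $g=h\circ\varphi$ has the corresponding property on $\mathbb D\setminus\{\varphi^{-1}(s)\}$, which is immediate because $\varphi$ is a homeomorphism, and that $h'(s)\neq0\iff g'(\varphi^{-1}(s))\neq0$, which is immediate from the chain rule and $\varphi'\neq0$. No genuine obstacle arises; the content is entirely a conformal change of variables plus the definition of the Hurwitz covering.
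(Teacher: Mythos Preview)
Your proof is correct and uses the same ingredients as the paper --- the Riemann map, the Hurwitz covering $G_b:\mathbb D\to\Omega$, and Proposition~\ref{prop3} --- but you organize them differently. The paper composes the Riemann map $T:Y\to\mathbb D$ with the Hurwitz covering $g_w:\mathbb D\to\Omega$ to obtain directly, for each $w\in\Omega$, a covering $g_w\circ T$ of $Y\setminus\{T^{-1}(0)\}$ onto $\Omega\setminus\{w\}$ extending holomorphically to $Y$, and then applies Proposition~\ref{prop3} once with basepoint $Y$ to conclude $\eta_\Omega^Y\equiv\eta_\Omega$ (the case $Y=\mathbb D$ being a special instance). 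You instead first establish $\eta_\Omega^Y\equiv\eta_\Omega^{\mathbb D}$ by a direct conformal change of variables in the competitor families, and only then invoke Proposition~\ref{prop3} for the basepoint $\mathbb D$. Your intermediate step is essentially the content of Corollary~\ref{cor4.17} (conformal invariance of the basepoint), which the paper states later; so your route is slightly longer but has the minor advantage of isolating that invariance explicitly. The parenthetical about $\Omega=\mathbb C$ is unnecessary, since the hypothesis already requires $\Omega\subsetneq\mathbb C$.
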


\begin{proof}
Since $Y\subsetneq\mathbb{C}$ is a simply connected domain, by Riemann Mapping Theorem there exists a conformal homeomorphism $T$ from $Y$ onto $\mathbb{D}.$ Furthermore, $\Omega\subsetneq\mathbb{C}$ implies that for every point $w\in\Omega$ there is a Hurwitz covering map $g_w$ from $\mathbb{D}$ onto $\Omega$ with $g_w(0)=w.$ Hence, by using the composed map $g\circ T$ from $Y$ onto $\Omega$ in Proposition $\ref{prop3},$ we conclude our result.
\end{proof}

It is well-known that both the Hurwitz and the hyperbolic metrics as well the generalized Kobayashi metric $\kappa$ have distance decreasing properties. The following result provides a similar property for the generalized Hurwitz metric.

\begin{theorem}{\rm(Distance decreasing property of the generalized Hurwitz density)}\label{thm3}
Let $\Omega$ and $\triangle$ be any subdomains of $\mathbb{C}$ and $Y\subsetneq\mathbb{C}$ be a domain. If $h$ is a holomorphic function from $\Omega$ to $\triangle$ with $h(a)=b,~h'(a)\neq 0 $ and $h(w)\neq b$ for all $w\in\Omega\setminus\{a\},$ then
$$
\eta_{\triangle}^Y(h(a))|h'(a)|\leq\eta_{\Omega}^Y(a).
$$
\end{theorem}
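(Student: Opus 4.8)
The plan is to unwind both generalized densities to their defining infima and build a competing map for the target side out of a near-optimal map on the source side. Fix $a\in\Omega$ with $h(a)=b\in\triangle$ and $h'(a)\neq 0$, and $h(w)\neq b$ on $\Omega\setminus\{a\}$. Given any $\epsilon>0$, choose $t\in Y$ and a holomorphic $g\colon Y\to\Omega$ with $g(t)=a$, $g(s)\neq a$ for $s\in Y\setminus\{t\}$, $g'(t)\neq 0$, and
$$
\frac{\eta_Y(t)}{|g'(t)|}\leq \eta_{\Omega}^Y(a)+\epsilon .
$$
Then consider the composition $\phi:=h\circ g\colon Y\to\triangle$. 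The first thing to check is that $\phi$ is an admissible competitor in the infimum defining $\eta_{\triangle}^Y(b)$: we have $\phi(t)=h(g(t))=h(a)=b$, and $\phi'(t)=h'(g(t))\,g'(t)=h'(a)\,g'(t)\neq 0$. The key point is that $\phi(s)\neq b$ for $s\in Y\setminus\{t\}$: if $\phi(s)=b$ for some such $s$, then $h(g(s))=b$, so either $g(s)=a$ (impossible since $s\neq t$) or $g(s)\in\Omega\setminus\{a\}$ with $h(g(s))=b$ (impossible by hypothesis on $h$). Hence $\phi$ is admissible.

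Plugging $\phi$ into Definition~\ref{def4} gives
$$
\eta_{\triangle}^Y(b)\leq \frac{\eta_Y(t)}{|\phi'(t)|}=\frac{\eta_Y(t)}{|h'(a)|\,|g'(t)|}=\frac{1}{|h'(a)|}\cdot\frac{\eta_Y(t)}{|g'(t)|}\leq \frac{\eta_{\Omega}^Y(a)+\epsilon}{|h'(a)|}.
$$
Multiplying through by $|h'(a)|$ and letting $\epsilon\to 0$ yields $\eta_{\triangle}^Y(b)\,|h'(a)|\leq \eta_{\Omega}^Y(a)$, which is the claim. One should note the degenerate cases: if $\eta_{\Omega}^Y(a)=+\infty$ there is nothing to prove, and if the infimum is not attained the $\epsilon$-argument above handles it cleanly; if no admissible $g$ exists at all then $\eta_{\Omega}^Y(a)=+\infty$ by convention and again there is nothing to prove.

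I do not expect a serious obstacle here — the statement is the natural functoriality of a push-forward construction, and the proof is essentially a one-line composition argument dressed up with bookkeeping. The only point requiring genuine (if brief) care is verifying that the composition $h\circ g$ inherits the ``omits the value $b$ away from the base point'' condition; this is exactly where the hypothesis $h(w)\neq b$ on $\Omega\setminus\{a\}$ is used, and it is worth spelling out the two-case argument above rather than leaving it implicit. Everything else is routine manipulation of the defining infimum, parallel to the proof of Proposition~\ref{prop2}.
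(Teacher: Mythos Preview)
Your proof is correct and follows essentially the same approach as the paper's: pick a near-optimal admissible map $g$ for $\eta_{\Omega}^Y(a)$, compose with $h$ to obtain an admissible competitor for $\eta_{\triangle}^Y(b)$, and let $\epsilon\to 0$. Your version is in fact a bit more careful than the paper's in spelling out the two-case verification that $h\circ g$ omits $b$ on $Y\setminus\{t\}$ and in explicitly handling the degenerate case $\eta_{\Omega}^Y(a)=+\infty$.
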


\begin{proof}
By the definition of generalized Hurwitz density, for every $\epsilon>0$ there exists a point $c\in Y$ and a holomorphic map $g$ from $Y$ to $\Omega$ with $g(c)=a,~g(s)\neq a$ for all $s\in Y\setminus\{c\},~g'(c)\neq 0$
and
\begin{equation}\label{eq6}
\eta_{\Omega}^Y(a)\geq\cfrac{\eta_Y(c)}{|g'(c)|}-\epsilon.
\end{equation}
Note that, $h\circ g$ maps $Y$ to $\triangle$ such that $(h\circ g)(c)=b,~ (h\circ g)(s)\neq b$ for all $s\in Y\setminus\{c\}$ and
$(h\circ g)'(c)=h'(g(c))g'(c)=h'(a)g'(c)\neq 0.$ Therefore, using 
$h\circ g$ in the definition of $\eta_{\triangle}^Y(h(a)),$ we have
$$
\eta_{\triangle}^Y(h\circ g)(c)\leq\cfrac{\eta_Y(c)}
{|(h\circ g)'(c)|}.
$$
By $\eqref{eq6}$ and using the chain rule, it follows that
$$
\eta_{\triangle}^Y(h\circ g)(c)|h'(a)|\leq\cfrac{\eta_Y(c)}{|g'(c)|}\leq\eta_{\Omega}^Y(a)+\epsilon.
$$
Letting $\epsilon$ goes to zero, we obtain
$$
\eta_{\triangle}^Y(h(a))|h'(a)|\leq\eta_{\Omega}^Y(a).
$$
This completes the proof.
\end{proof}

We now provide an example which demonstrate 
Proposition~\ref{prop3} in non-simply connected
domains. 
 
\begin{example}\label{exp3.18}
Let $Y=\mathbb{D}^*:=\mathbb{D}\setminus\{0\}$, the punctured unit disk and 
$\Omega=\mathbb{C}^*:=\mathbb{C}\setminus\{0\}$, the punctured plane. We shall prove that for all $w\in\mathbb{C}^*$
$$
\eta_{\mathbb{C}^*}^{\mathbb{D}^*}(w)=\eta_{\mathbb{C}^*}(w).
$$ 

As stated in \cite[p. 322]{Neh52} (see also \cite[(4.1)]{Min16}), the Hurwitz covering map from $\mathbb{D}$ onto $\mathbb{C}\setminus\{1\}$ is obtained by the infinite product representation
\begin{equation}\label{InfProd}
g(w)=16w\prod_{n=1}^{\infty}\Big(\frac{1+w^{2n}}{1+w^{2n-1}}\Big)^8,\quad |w|<1.
\end{equation} 
For $s\in\mathbb{D}$, it is well known that 
the map $T(z)={(z-s)}/{(1-\overline{s}z)}$ defines a M\"obius transformation of $\mathbb{D}$ onto itself. Clearly, $T(s)=0,T'(s)={(1+|s|^2)}/{(1-|s|^2)}>0.$ Since $g$ is a holomorphic covering map and $T$ is a one-one holomorphic map on $\mathbb{D}$, 
the composition $g\circ T$ is also a holomorphic covering map satisfying 
$(g\circ T)(s)=0$ and $(g\circ T)(t)\neq 0 $ for all $t\in\mathbb{D}\setminus\{s\}.$
Furthermore,  $(g\circ T)'(s)>0$ which follows from the chain rule and the fact that 
$g'(0)=16>0$ and $T'(s)>0.$
By the distance decreasing property we have 
\begin{equation}\label{eqExp3.12}
\eta_{\mathbb{C}\setminus\{1\}}((g\circ T)(s))(g\circ T)'(s)=\eta_{\mathbb{D}}(s).
\end{equation}
Restricting the function $g\circ T$ onto $\mathbb{D}^*$ and plugging it in the 
definition of $\eta_{\mathbb{C}\setminus\{1\}}^{\mathbb{D}^*}(0)$ we obtain
$$
\eta_{\mathbb{C}\setminus\{1\}}^{\mathbb{D}^*}(0)\leq\cfrac{\eta_{\mathbb{D}^*}(s)}{(g\circ T)'(s)}.
$$
Now, we choose a sequence $s_n\in\mathbb{D}^*$ such that $|s_n|\to 1.$ 
By using the same argument as above, we can find M\"obius transformations 
$T_n$ from $\mathbb{D}$ onto itself with $T(s_n)=0$ and $T'(s_n)> 0.$ 
Therefore, it follows from \eqref{eqExp3.12} that 
$$
\eta_{\mathbb{C}\setminus\{1\}}^{\mathbb{D}^*}(0)
\leq\cfrac{\eta_{\mathbb{D}^*}(s_n)}{(g\circ T_n)'(s_n)}
=\cfrac{\eta_{\mathbb{D}^*}(s_n)}{\eta_{\mathbb{D}}(s_n)}\eta_
{\mathbb{C}\setminus\{1\}}(0),
$$
since $(g\circ T)'(s_n)=0$.
We notice from \cite[Section 2]{Min16} that the Hahn density of the punctured unit disk obtained by (see \cite[(2)]{Min83}) 
$$S_{\mathbb{D}^*}(s_n)
=\cfrac{1+|s_n|}{4|s_n|(1-|s_n|)}
$$ 
exceeds 
the Hurwitz density. Thus, we obtain
$$
\eta_{\mathbb{C}\setminus\{1\}}^{\mathbb{D}^*}(0)
\le \cfrac{\eta_{\mathbb{D}^*}(s_n)}{\eta_{\mathbb{D}}(s_n)}
\eta_{\mathbb{C}\setminus\{1\}}(0)
\leq\cfrac{S_{\mathbb{D}^*}(s_n)}{\eta_{\mathbb{D}}(s_n)}
\eta_{\mathbb{C}\setminus\{1\}}(0)
=\cfrac{1+|s_n|}{4|s_n|(1-|s_n|)}(1-|s_n|^2)
\eta_{\mathbb{C}\setminus\{1\}}(0).
$$
Now, letting $|s_n|\to 1$ we have $\eta_{\mathbb{C}\setminus\{1\}}^{\mathbb{D}^*}(0)
\leq\eta_{\mathbb{C}\setminus\{1\}}(0).$ The reverse inequality is followed by 
Proposition \ref{prop2}. Now, by using the holomorphic functions 
$f(w)=1-w$ and $h(w)=bw$ (for some complex constant $b$) in the distance decreasing property for the 
generalized Hurwitz density, it follows that, both the metrics coincide 
on $\mathbb{C}^*$. That is, 
$\eta_{\mathbb{C}^*}^{\mathbb{D}^*}(w)=\eta_{\mathbb{C}^*}(w)$ for all $w\in \mathbb{C}^*.$
\hfill{$\Box$}
\end{example}

Next we define the generalized Hurwitz distance between two points in a domain.

\begin{definition}
Let $\Omega\subset\mathbb{C}$ and $Y\subsetneq\mathbb{C}$ be domains. For $w_1,w_2\in\Omega$, define
$$
\eta_{\Omega}^Y(z_1,z_2)=\inf\int_{\gamma}\eta_{\Omega}^Y(w)|dw|,
$$ 
where the infimum is taken over all rectifiable paths $\gamma$ in $\Omega$ joining 
$z_1$ to $z_2.$
\end{definition}

Proof of the following theorem is similar to that of Theorem~\ref{thm1}.
\begin{theorem}
Let $Y\subsetneq\mathbb{C}$ be a domain.
If $\Omega$ is a hyperbolic domain, then $(\Omega,\eta_{\Omega}^Y)$ is a complete metric space.
\end{theorem}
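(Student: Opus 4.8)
The plan is to mimic the proof of Theorem~\ref{thm1}, so I would first establish the two analogues of the ingredients used there: strict positivity of $\eta_{\Omega}^Y(w_1,w_2)$ for distinct points, and compactness of closed $\eta_{\Omega}^Y$-balls. The second part is immediate, so the first part is really the only thing requiring thought.

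For the metric axioms, symmetry and the triangle inequality follow at once from the definition of $\eta_{\Omega}^Y(z_1,z_2)$ as an infimum of path integrals, exactly as before. For strict positivity I would invoke Proposition~\ref{prop2}: since $\Omega$ is hyperbolic it is in particular a proper subdomain of $\mathbb{C}$, so $\eta_{\Omega}^Y(w)\geq\eta_{\Omega}(w)$ pointwise on $\Omega$. Integrating this pointwise bound along any rectifiable path $\gamma$ joining $w_1$ to $w_2$ gives $\int_{\gamma}\eta_{\Omega}^Y(w)\,|dw|\geq\int_{\gamma}\eta_{\Omega}(w)\,|dw|\geq\eta_{\Omega}(w_1,w_2)$, and taking the infimum over $\gamma$ yields $\eta_{\Omega}^Y(w_1,w_2)\geq\eta_{\Omega}(w_1,w_2)$. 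Combining with \eqref{eqn1} from the proof of Theorem~\ref{thm1} we get $\eta_{\Omega}^Y(w_1,w_2)\geq\lambda_{\Omega}(w_1,w_2)>0$ whenever $w_1\neq w_2$. Hence $(\Omega,\eta_{\Omega}^Y)$ is a metric space.

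For completeness I would repeat verbatim the argument of Theorem~\ref{thm1}: by \cite[Theorem~2.5.28, p.~52]{Bur01}, a locally compact length space is complete iff every closed ball is compact. Since $\lambda_{\Omega}$ is complete on the hyperbolic domain $\Omega$, every closed hyperbolic disc $\overline{D}_{\lambda_{\Omega}}(a,r)$ is compact; and the chain of inequalities $\lambda_{\Omega}\leq\eta_{\Omega}\leq\eta_{\Omega}^Y$ (the first from \cite{Min16}, the second from Proposition~\ref{prop2}) gives $\overline{D}_{\eta_{\Omega}^Y}(a,r)\subset\overline{D}_{\lambda_{\Omega}}(a,r)$. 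A closed subset of a compact set is compact, so every closed $\eta_{\Omega}^Y$-ball is compact, and completeness follows.

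The one point that deserves a little care — and which I expect to be the only genuine obstacle — is verifying that $(\Omega,\eta_{\Omega}^Y)$ really is a length space so that the cited completeness criterion applies: one must check that $\eta_{\Omega}^Y$ is finite and locally bounded (indeed it is sandwiched between $\eta_{\Omega}$ and, in view of the remarks following Definition~\ref{def4}, could a priori be $+\infty$, but the hypothesis that $\Omega$ is hyperbolic and hence a proper subdomain forces $\eta_{\Omega}^Y(w)=\eta_{\Omega}(w)<\infty$ when a Hurwitz covering is available, and in general local finiteness should be extracted by testing with a fixed holomorphic map), so that the infimum defining $\eta_{\Omega}^Y(z_1,z_2)$ is over a nonempty family and is itself finite, and local compactness of the induced topology holds. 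Once finiteness and the coincidence of the induced topology with the Euclidean topology are in hand, the rest is the routine transcription above.
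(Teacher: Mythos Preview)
Your approach is exactly what the paper has in mind: the paper gives no separate proof but simply says that the argument is ``similar to that of Theorem~\ref{thm1}'', and your write-up is precisely that transcription, with Proposition~\ref{prop2} supplying the extra inequality $\eta_{\Omega}^Y\geq\eta_{\Omega}$ needed to feed into the chain $\eta_{\Omega}^Y\geq\eta_{\Omega}\geq\lambda_{\Omega}$.

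The caveat you raise about finiteness is well taken and, in fact, goes beyond what the paper itself addresses. The paper explicitly remarks after Definition~\ref{def4} that $\eta_{\Omega}^Y$ ``can be $+\infty$ at some points, or even at every point'', yet the theorem is stated for an arbitrary proper subdomain $Y\subsetneq\mathbb{C}$ without further hypothesis. Your tentative fix (``$\eta_{\Omega}^Y(w)=\eta_{\Omega}(w)$ when a Hurwitz covering is available'') is not available in general: for instance, if $Y=\mathbb{C}\setminus\{0\}$ and $\Omega$ is hyperbolic, every holomorphic map $Y\to\Omega$ is constant (lift $h\circ\exp:\mathbb{C}\to\Omega$ through the universal cover $\mathbb{D}\to\Omega$ and apply Liouville), so the defining family is empty and $\eta_{\Omega}^Y\equiv+\infty$. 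Thus the statement really needs an implicit assumption such as $Y$ hyperbolic, or else the conclusion should be read as ``complete extended metric space''. This is a gap in the paper, not in your argument; modulo this point, your proof is the intended one.
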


We do have also the distance decreasing property in the global sense whose proof follows the steps of the proof of Proposition~\ref{prop1}.
\begin{theorem}
Let $Y$ be a proper subdomain of $\mathbb{C}$ and $\Omega,~ \triangle$ be any subdomain of $\mathbb{C}.$ If $h$ is a one-to-one holomorphic map from $\Omega$ to $\triangle$, then
$$
\eta_{\triangle}^Y(h(w_1,w_2))\leq\eta_{\Omega}^Y(w_1,w_2),
$$
for all $w_1,w_2$ in $\Omega.$
\end{theorem}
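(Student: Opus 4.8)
The plan is to imitate the proof of Proposition~\ref{prop1} almost verbatim, replacing the Hurwitz density by the generalized Hurwitz density $\eta_{\cdot}^Y$ and replacing the appeal to Theorem~A by an appeal to the infinitesimal distance decreasing property just established in Theorem~\ref{thm3}. (As in Proposition~\ref{prop1}, the notation $\eta_{\triangle}^Y(h(w_1,w_2))$ is shorthand for $\eta_{\triangle}^Y(h(w_1),h(w_2))$.) If $\eta_{\Omega}^Y(w_1,w_2)=+\infty$ there is nothing to prove, so assume it is finite. Fix $\epsilon>0$ and, by the definition of the generalized Hurwitz distance, choose a rectifiable path $\gamma$ in $\Omega$ joining $w_1$ and $w_2$ with
\[
\int_{\gamma}\eta_{\Omega}^Y(w)\,|dw|\le\eta_{\Omega}^Y(w_1,w_2)+\epsilon .
\]

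Next I would push the path forward. Since $h$ is holomorphic it is locally Lipschitz, so $h(\gamma)$ is again a rectifiable path, now in $\triangle$, joining $h(w_1)$ and $h(w_2)$; hence, by the definition of $\eta_{\triangle}^Y(h(w_1),h(w_2))$ and the change of variables $z=h(w)$,
\[
\eta_{\triangle}^Y\big(h(w_1),h(w_2)\big)\le\int_{h(\gamma)}\eta_{\triangle}^Y(z)\,|dz|
=\int_{\gamma}\eta_{\triangle}^Y\big(h(w)\big)\,|h'(w)|\,|dw| .
\]
The only point needing care is the pointwise estimate $\eta_{\triangle}^Y(h(w))\,|h'(w)|\le\eta_{\Omega}^Y(w)$ for every $w\in\Omega$. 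Because $h$ is injective and holomorphic, $h'(w)\neq 0$ for all $w\in\Omega$, and injectivity also gives $h(w')\neq h(w)$ for all $w'\in\Omega\setminus\{w\}$; thus $h$ satisfies the hypotheses of Theorem~\ref{thm3} at each point $w$ (with $a=w$, $b=h(w)$), and that theorem yields exactly the desired inequality. Substituting it into the previous display gives
\[
\eta_{\triangle}^Y\big(h(w_1),h(w_2)\big)\le\int_{\gamma}\eta_{\Omega}^Y(w)\,|dw|\le\eta_{\Omega}^Y(w_1,w_2)+\epsilon ,
\]
and letting $\epsilon\to 0$ finishes the argument.

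I expect the main (though minor) obstacle to be bookkeeping around the possibility, noted in the remark following Definition~\ref{def4}, that $\eta_{\Omega}^Y$ may equal $+\infty$ at some or all points: one should observe that if every admissible path $\gamma$ has $\int_{\gamma}\eta_{\Omega}^Y(w)\,|dw|=+\infty$ then the right-hand side is $+\infty$ and the claimed inequality is vacuous, whereas on the locus where the relevant densities are finite the argument above runs unchanged. Everything else---rectifiability of $h(\gamma)$, the change-of-variables identity, and the passage to the limit in $\epsilon$---is routine and identical to the proof of Proposition~\ref{prop1}.
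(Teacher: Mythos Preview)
Your proof is correct and follows exactly the approach the paper indicates: the paper states only that the argument ``follows the steps of the proof of Proposition~\ref{prop1},'' which is precisely what you do, replacing Theorem~A by the infinitesimal distance decreasing property of Theorem~\ref{thm3} and using injectivity of $h$ to verify its hypotheses at each point. Your added remarks on rectifiability of $h(\gamma)$ and the $+\infty$ case are reasonable extra care beyond what the paper spells out.
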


Note that, till now we have derived all the results of the generalized Hurwitz density $\eta_{\Omega}^Y$ for a base domain $Y.$ In the next theorem we will see the comparison between generalized Hurwitz densities when the range domain is fixed while the source domain is varying. 

\begin{theorem}\label{thm4}
Let $Y_1,~Y_2$ be proper subdomains of $\mathbb{C}$ and $\Omega$ be any subdomain of $\mathbb{C}.$ If for every point $b\in Y_2,$ there exists a point $a\in Y_1$ and a holomorphic covering map $h_b$ from $Y_1\setminus\{a\}$ onto $Y_2\setminus\{b\}$ which extends to a holomorphic map from $Y_1$ onto $Y_2$ with $h_b(a)=b,~ h_b(w)\neq b$ for any $w\in Y_1\setminus\{a\}$ and $h'_b(a)\neq 0,$ then
$$
\eta_{\Omega}^{Y_1}(\zeta)\leq\eta_{\Omega}^{Y_2}(\zeta),
$$
for all $\zeta$ in $\Omega.$
\end{theorem}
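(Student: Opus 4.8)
The plan is to chain the ``transfer'' maps from $Y_1$ to $Y_2$ with the competing maps in the definition of $\eta_\Omega^{Y_2}$, and to control the factor that appears by invoking the equality case of Theorem~A on the covering $h_b$. Fix $\zeta\in\Omega$ and let $\epsilon>0$. By Definition~\ref{def4} applied to the basepoint domain $Y_2$, there exist a point $b\in Y_2$ and a holomorphic map $g\colon Y_2\to\Omega$ with $g(b)=\zeta$, $g(t)\neq\zeta$ for all $t\in Y_2\setminus\{b\}$, $g'(b)\neq 0$, and
\begin{equation*}
\eta_\Omega^{Y_2}(\zeta)\geq\frac{\eta_{Y_2}(b)}{|g'(b)|}-\epsilon.
\end{equation*}
By hypothesis there is a point $a\in Y_1$ and a covering map $h_b\colon Y_1\setminus\{a\}\to Y_2\setminus\{b\}$ extending holomorphically to $h_b\colon Y_1\to Y_2$ with $h_b(a)=b$, $h_b(w)\neq b$ for $w\in Y_1\setminus\{a\}$, and $h_b'(a)\neq 0$.

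Now consider the composition $g\circ h_b\colon Y_1\to\Omega$. It satisfies $(g\circ h_b)(a)=g(b)=\zeta$, and since $h_b$ omits the value $b$ on $Y_1\setminus\{a\}$ while $g$ omits the value $\zeta$ on $Y_2\setminus\{b\}$, we get $(g\circ h_b)(w)\neq\zeta$ for all $w\in Y_1\setminus\{a\}$; moreover $(g\circ h_b)'(a)=g'(b)\,h_b'(a)\neq 0$ by the chain rule. Hence $g\circ h_b$ is an admissible competitor in the definition of $\eta_\Omega^{Y_1}(\zeta)$, so
\begin{equation*}
\eta_\Omega^{Y_1}(\zeta)\leq\frac{\eta_{Y_1}(a)}{|(g\circ h_b)'(a)|}=\frac{\eta_{Y_1}(a)}{|g'(b)|\,|h_b'(a)|}.
\end{equation*}
The key step is to identify the right-hand side with $\eta_{Y_2}(b)/|g'(b)|$. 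Since $h_b$ is a covering of $Y_1\setminus\{a\}$ onto $Y_2\setminus\{b\}$ that extends to a holomorphic map of $Y_1$ onto $Y_2$ with $h_b(a)=b$ and $h_b'(a)\neq 0$, the equality case of Theorem~A (the distance decreasing property of the Hurwitz density) gives exactly
\begin{equation*}
\eta_{Y_2}(b)\,|h_b'(a)|=\eta_{Y_1}(a),
\end{equation*}
so $\eta_{Y_1}(a)/(|g'(b)|\,|h_b'(a)|)=\eta_{Y_2}(b)/|g'(b)|\leq\eta_\Omega^{Y_2}(\zeta)+\epsilon$. Combining the two displays yields $\eta_\Omega^{Y_1}(\zeta)\leq\eta_\Omega^{Y_2}(\zeta)+\epsilon$, and letting $\epsilon\to 0$ finishes the proof.

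The argument is essentially a bookkeeping exercise once the pieces are in place, mirroring the proof of Theorem~\ref{thm3}; the only point that requires genuine care is verifying that $g\circ h_b$ really is admissible for $\eta_\Omega^{Y_1}$, i.e.\ that the omitted-value condition is preserved under composition (it is, precisely because $h_b$ is injective nowhere-near $a$ only in the sense of omitting $b$, which is all that is needed). One subtlety worth flagging: if $\eta_\Omega^{Y_2}(\zeta)=+\infty$ there is nothing to prove, and if the infimum defining $\eta_\Omega^{Y_2}(\zeta)$ is not attained the $\epsilon$-argument above handles it uniformly, so no separate case analysis is needed. I do not expect any serious obstacle here beyond making sure the hypotheses of Theorem~A's equality clause are matched verbatim by the map $h_b$.
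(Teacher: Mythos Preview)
Your proof is correct and follows essentially the same approach as the paper's own proof: pick a near-extremal competitor $g$ for $\eta_\Omega^{Y_2}(\zeta)$, precompose with the covering $h_b$ to obtain an admissible competitor for $\eta_\Omega^{Y_1}(\zeta)$, and use the equality case of Theorem~A to convert $\eta_{Y_1}(a)/|h_b'(a)|$ into $\eta_{Y_2}(b)$. Your write-up is in fact slightly more careful than the paper's (you explicitly justify the omitted-value condition for $g\circ h_b$ and note the trivial case $\eta_\Omega^{Y_2}(\zeta)=+\infty$), but the argument is the same.
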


\begin{proof}
 Let $\zeta$ be any arbitrary point in $\Omega$ and $\epsilon$ be a positive real number.  By definition of $\eta_{\Omega}^{Y_2},$ there exists a point $b$ in $Y_2$ and a holomorphic function $g$ from $Y_2$
to $\Omega$ with $g(b)=\zeta,~g(s)\neq \zeta$ for any $s\in Y_2\setminus\{b\},~g'(b)\neq 0$ such that
\begin{equation}\label{eq7}
\eta_{\Omega}^{Y_2}(\zeta)\geq\cfrac{\eta_{Y_2}(b)}{|g'(b)|}-\epsilon.
\end{equation}
Since for every point $b\in Y_2,$ there exists a point $a$ in $Y_1$ and a holomorphic covering $h_b$ from $Y_1$ onto $Y_2$ with $h_b(a)=b,~h_b(w)\neq b$ for any $w\in Y_1\setminus\{a\},~g'(a)\neq 0$, by Theorem~A we have
\begin{equation}\label{eq8}
\eta_{Y_2}(b)|h'_b(a)|=\eta_{Y_1}(a).
\end{equation}
Note that, the composition $g\circ h_b$ is a holomorphic function from $Y_1$ to $\Omega$ with $(g\circ h_b)(a)=\zeta,~ (g\circ h_b)(w)\neq \zeta$ for any $w$ in 
$Y_1$ and $(g\circ h_b)'(a)=g'(b)h'_b(a)\neq 0.$ Therefore, by the definition of $\eta_{\Omega}^{Y_1},$ we obtain
\begin{equation}\label{eq9}
\eta_{\Omega}^{Y_1}(a)\leq\cfrac{\eta_{Y_1}(a)}{|(g\circ h_b)'(a)|}
=\cfrac{\eta_{Y_1}(a)}{|g'(b)h'_b(a)|}.
\end{equation}
By $\eqref{eq7},~\eqref{eq8},~\eqref{eq9},$ it follows that
$$
\eta_{\Omega}^{Y_2}(\zeta)\geq\cfrac{\eta_{Y_2}(b)}{|g'(b)|}-\epsilon
=\cfrac{\eta_{Y_1}(a)}{|g'(b)||h'_b(a)|}-\epsilon
\geq\eta_{\Omega}^{Y_1}(\zeta)-\epsilon.
$$   
Letting $\epsilon$ goes to zero, we have $\eta_{\Omega}^{Y_2}(\zeta)\geq\eta_{\Omega}^{Y_1}(\zeta),$ which completes the proof our result.
\end{proof}

We look forward for the existence of 
non-simply connected domains $Y_1$ and $Y_2$
validating the statement of Theorem~\ref{thm4},
however, they remain open due to their non-trivial
nature.

\begin{corollary}
If $Y_1\subsetneq\mathbb{C}$ is a simply connected domain, then for all proper subdomains $Y_2$ and $\Omega$ of $\mathbb{C}$, we have
$$
\eta_{\Omega}(w)=\eta_{\Omega}^{\mathbb{D}}(w)=\eta_{\Omega}^{Y_1}(w)\leq
\eta_{\Omega}^{Y_2}(w),
$$
for all $w$ in $\Omega.$
\end{corollary}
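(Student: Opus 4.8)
The plan is to read off the whole chain from results already in hand. Since $\Omega\subsetneq\mathbb{C}$, the Remark following Definition~\ref{def3} says that a Hurwitz covering map from $\mathbb{D}$ onto $\Omega$ realizes the infimum, so $\eta_{\Omega}(w)=\eta_{\Omega}^{\mathbb{D}}(w)$ for all $w\in\Omega$; and since $Y_1\subsetneq\mathbb{C}$ is simply connected, the corollary immediately after Proposition~\ref{prop3} gives $\eta_{\Omega}^{\mathbb{D}}\equiv\eta_{\Omega}^{Y_1}\equiv\eta_{\Omega}$. These together furnish the two equalities in the statement, so only the inequality $\eta_{\Omega}^{Y_1}(w)\le\eta_{\Omega}^{Y_2}(w)$ remains.

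For that, I would apply Theorem~\ref{thm4} with first source domain $Y_1$, second source domain $Y_2$, and range domain $\Omega$. The sole hypothesis to check is that for every $b\in Y_2$ there is a point $a\in Y_1$ and a holomorphic covering map $h_b$ of $Y_1\setminus\{a\}$ onto $Y_2\setminus\{b\}$ which extends to a holomorphic map $Y_1\to Y_2$ with $h_b(a)=b$, $h_b(w)\ne b$ for $w\in Y_1\setminus\{a\}$, and $h_b'(a)\ne0$. To construct such a map, let $\phi: Y_1\to\mathbb{D}$ be a conformal homeomorphism (Riemann Mapping Theorem, valid because $Y_1\subsetneq\mathbb{C}$ is simply connected), let $G_b: \mathbb{D}\to Y_2$ be the Hurwitz covering of $(\mathbb{D},0)$ onto $(Y_2,b)$ (valid because $Y_2\subsetneq\mathbb{C}$), put $a:=\phi^{-1}(0)$, and set $h_b:=G_b\circ\phi$.

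It then remains to verify the properties of $h_b$, all of which are routine: $\phi$ restricts to a conformal homeomorphism $Y_1\setminus\{a\}\to\mathbb{D}\setminus\{0\}$ and $G_b$ restricts to a covering $\mathbb{D}\setminus\{0\}\to Y_2\setminus\{b\}$, so $h_b$ is a covering of $Y_1\setminus\{a\}$ onto $Y_2\setminus\{b\}$; $h_b$ is holomorphic on all of $Y_1$ since $\phi$ and the extended $G_b$ are; $h_b(a)=G_b(0)=b$; $h_b(w)=G_b(\phi(w))\ne b$ when $w\ne a$ because then $\phi(w)\ne0$ and $G_b$ omits $b$ on $\mathbb{D}\setminus\{0\}$; and $h_b'(a)=G_b'(0)\phi'(a)\ne0$ since $G_b'(0)>0$ and $\phi'(a)\ne0$. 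With the hypothesis of Theorem~\ref{thm4} confirmed, we obtain $\eta_{\Omega}^{Y_1}(w)\le\eta_{\Omega}^{Y_2}(w)$ for all $w\in\Omega$, which completes the proof. I do not anticipate a genuine obstacle: the statement is essentially bookkeeping on top of Theorem~\ref{thm4}, Proposition~\ref{prop3} and its corollary, and the Remark after Definition~\ref{def3}; the only mildly delicate point is confirming that composing with the Riemann map preserves the ``covering off one point / holomorphic extension / nonvanishing derivative at that point'' package that Theorem~\ref{thm4} requires, which is exactly the check sketched just above.
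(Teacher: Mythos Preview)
Your proposal is correct and follows essentially the same approach as the paper: the equalities are read off from the Remark after Definition~\ref{def3} and the corollary following Proposition~\ref{prop3}, and the inequality is obtained from Theorem~\ref{thm4} by composing a Riemann map $Y_1\to\mathbb{D}$ with the Hurwitz covering $\mathbb{D}\to Y_2$. The paper's proof is terser (it only spells out the Theorem~\ref{thm4} step), but the construction of $h_b$ and the verification of its properties are identical to yours.
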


\begin{proof}
Since $Y_1\subsetneq\mathbb{C}$ is a simply connected domain, by Riemann Mapping Theorem, there exists a conformal homeomorphism $f$ from $Y_1$ onto $\mathbb{D}.$ Furthermore, there exists a Hurwitz covering map $T_b$  from $\mathbb{D}$ onto $Y_2$ for every $b$ in $Y_2.$ Thus, the composed map $T_b\circ f$ is a holomorphic covering from $Y_1\setminus\{f^{-1}(0)\}$ onto $Y_2\setminus\{b\},$ which extends to a holomorphic function from $Y_1$ to $Y_2$ with $(T_b\circ f)(f^{-1}(0))=0,~ (T_b\circ f)'(f^{-1}(0))\neq 0$ and $(T_b\circ f)(s)\neq b$ for all $s$ in 
$Y_2\setminus\{f^{-1}(0)\}.$ Taking $h_b=T_b\circ f$ in Theorem $\ref{thm4},$ we obtain the desired result.
\end{proof}
Two subdomains $Y_1$ and $Y_2$ of $\mathbb{C}$ are conformally equivalent 
if there exists a holomorphic bijection $f$ from $Y_1$ to $Y_2$.

\begin{corollary}\label{cor4.17}
Let $\Omega\subset\mathbb{C}$ be any arbitrary domain. If $Y_1\subsetneq\mathbb{C}$ and $Y_2\subsetneq\mathbb{C}$ are conformally equivalent domains, then 
$$
\eta_{\Omega}^{Y_1}(w)=\eta_{\Omega}^{Y_2}(w)
$$
for all $w$ in $\Omega.$ 
\end{corollary}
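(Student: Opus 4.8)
The plan is to deduce this from Theorem~\ref{thm4} applied in both directions. Let $f\colon Y_1\to Y_2$ be a conformal homeomorphism. The first step is to verify that $f$ satisfies the hypotheses imposed on the maps $h_b$ in Theorem~\ref{thm4}: given any $b\in Y_2$, set $a=f^{-1}(b)\in Y_1$ and take $h_b=f$. Then $f(a)=b$, and since $f$ is injective we have $f(w)\neq b$ for every $w\in Y_1\setminus\{a\}$; moreover $f'(a)\neq 0$ because $f$ is conformal. Finally, the restriction $f|_{Y_1\setminus\{a\}}\colon Y_1\setminus\{a\}\to Y_2\setminus\{b\}$ is a homeomorphism, hence a (one-sheeted) holomorphic covering map, which of course extends to the holomorphic map $f\colon Y_1\to Y_2$. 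Thus all hypotheses of Theorem~\ref{thm4} are met, and we obtain
$$
\eta_{\Omega}^{Y_1}(w)\leq\eta_{\Omega}^{Y_2}(w)
$$
for all $w\in\Omega$.

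For the reverse inequality I would repeat the argument with the conformal homeomorphism $f^{-1}\colon Y_2\to Y_1$ in place of $f$, which is legitimate since $Y_1$ and $Y_2$ are both proper subdomains of $\mathbb{C}$ as required by Theorem~\ref{thm4}. This yields $\eta_{\Omega}^{Y_2}(w)\leq\eta_{\Omega}^{Y_1}(w)$ for all $w\in\Omega$, and combining the two inequalities gives the claimed equality.

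Alternatively, one can argue directly from Definition~\ref{def4}: for $\epsilon>0$ pick $s\in Y_2$ and $g\in\mathcal{H}(Y_2,\Omega)$ with $g(s)=w$, $g(t)\neq w$ for $t\in Y_2\setminus\{s\}$, $g'(s)\neq 0$ and $\eta_Y$-quotient within $\epsilon$ of $\eta_{\Omega}^{Y_2}(w)$; then $g\circ f\in\mathcal{H}(Y_1,\Omega)$ is an admissible competitor for $\eta_{\Omega}^{Y_1}(w)$ at the point $f^{-1}(s)$, and since $f$ is a conformal homeomorphism the equality case of Theorem~A gives $\eta_{Y_1}(f^{-1}(s))=\eta_{Y_2}(s)\,|f'(f^{-1}(s))|$, so by the chain rule the two density quotients coincide. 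Taking infima and then letting $\epsilon\to 0$, and symmetrizing via $f^{-1}$, again yields the result.

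I do not anticipate a genuine obstacle here: the only point that requires a word of justification is that a conformal bijection, restricted to a once-punctured domain, is still a covering map onto the correspondingly punctured image, and that the equality clause of Theorem~A applies to conformal homeomorphisms; both are immediate. The argument is therefore essentially a bookkeeping application of Theorem~\ref{thm4} (equivalently, of Theorem~A together with Definition~\ref{def4}) in the two directions.
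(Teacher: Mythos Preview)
Your proposal is correct and matches the paper's intended argument: the paper states Corollary~\ref{cor4.17} immediately after Theorem~\ref{thm4} without a separate proof, so it is meant to follow exactly as you describe, by applying Theorem~\ref{thm4} to the conformal bijection $f\colon Y_1\to Y_2$ and then to $f^{-1}$. Your verification that a conformal homeomorphism (restricted to the punctured domain) furnishes the required covering $h_b$ for each $b$ is precisely the missing sentence, and your alternative direct argument via Definition~\ref{def4} and the equality case of Theorem~A is an equally valid route.
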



\section{Lipschitz Domain}\label{sec5}
\setcounter{equation}{0}

In this section, one of our main objectives is to study the situations, in terms of the Hurwitz non-Lipschitz domains, when the Hurwitz density coincides with the generalized Hurwitz density. The following notations are useful in the definition of Hurwitz Lipschitz domains. Let $Y$ be a hyperbolic domain and $\Omega$ be a subdomain of $Y$. If $i$ is the inclusion map from 
$\Omega$ to $Y$, the global contraction constant $gl_{\eta}(\Omega,Y)$ is defined by
$$
gl_{\eta}(\Omega,Y):=\sup_{w_1,w_2\in\Omega,~ w_1\neq w_2}
\cfrac{\eta_Y(w_1,w_2)}{\eta_{\Omega}(w_1,w_2)}.
$$
If $Y$ is any proper subdomain of $\mathbb{C}$, then the infinitesimal contraction constant is defined as
$$
l_{\eta}(\Omega,Y):=\sup_{w\in\Omega}\cfrac{\eta_Y(w)}{\eta_{\Omega}(w)}.
$$
Since the inclusion map is an $injective$ holomorphic function from $\Omega$ to $Y,$ by the distance decreasing property of Hurwitz density, we have $\eta_Y(w)\leq\eta_{\Omega}(w)$ for every $w$ in $\Omega.$ Furthermore, by Proposition $\ref{prop1}$ it follows that $\eta_Y(w_1,w_2)\leq\eta_{\Omega}(w_1,w_2)$ for all $w_1$ and $w_2$ in $\Omega.$ Thus, both infinitesimal and global contraction constants are less than or equal to 1. 

\begin{theorem}\label{thm6}
Let $Y\subsetneq\mathbb{C}$ be a domain. If $\Omega$ is a subdomain of $Y,$ then $gl_{\eta}(\Omega,Y)\leq l_{\eta}(\Omega,Y)\leq 1.$ Furthermore, the inclusion map $i$ from $\Omega$ to $Y$ is a strict infinitesimal contraction map, whenever $\Omega$ is a proper subdomain of $Y.$  
\end{theorem}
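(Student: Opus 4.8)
The plan is to establish the two inequalities $gl_{\eta}(\Omega,Y)\leq l_{\eta}(\Omega,Y)$ and $l_{\eta}(\Omega,Y)\leq 1$ separately, and then upgrade the second to strictness when $\Omega\subsetneq Y$. The bound $l_{\eta}(\Omega,Y)\leq 1$ is essentially already in hand: the inclusion map $i\colon\Omega\to Y$ is injective and holomorphic with $i(w)=w$, so by Theorem~A (with $h=i$, $a=w$, $b=w$) we have $\eta_Y(w)|i'(w)|=\eta_Y(w)\leq\eta_{\Omega}(w)$ for every $w\in\Omega$, and taking the supremum of the ratio gives $l_{\eta}(\Omega,Y)\leq 1$. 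This is exactly the observation already recorded in the paragraph preceding the theorem, so I would simply cite it.

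For $gl_{\eta}(\Omega,Y)\leq l_{\eta}(\Omega,Y)$, I would argue directly from the integral definitions of the distances. Fix distinct $w_1,w_2\in\Omega$. For any rectifiable path $\gamma$ in $\Omega$ joining $w_1$ and $w_2$, the infinitesimal inequality $\eta_Y(w)\leq l_{\eta}(\Omega,Y)\,\eta_{\Omega}(w)$ holds pointwise along $\gamma$, so
$$
\eta_Y(w_1,w_2)\leq\int_{\gamma}\eta_Y(w)\,|dw|\leq l_{\eta}(\Omega,Y)\int_{\gamma}\eta_{\Omega}(w)\,|dw|.
$$
Taking the infimum over all such $\gamma$ on the right gives $\eta_Y(w_1,w_2)\leq l_{\eta}(\Omega,Y)\,\eta_{\Omega}(w_1,w_2)$; dividing and taking the supremum over $w_1\neq w_2$ yields $gl_{\eta}(\Omega,Y)\leq l_{\eta}(\Omega,Y)$. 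Combined with the previous paragraph this gives the chain $gl_{\eta}(\Omega,Y)\leq l_{\eta}(\Omega,Y)\leq 1$.

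It remains to show that the inclusion is a \emph{strict} infinitesimal contraction when $\Omega\subsetneq Y$, i.e.\ $\eta_Y(w)<\eta_{\Omega}(w)$ for all $w\in\Omega$. This is where I expect the real work to lie, and it rests on the equality case of Theorem~A: $\eta_Y(w)|i'(w)|=\eta_{\Omega}(w)$ would force the inclusion $i$ to be a covering of $\Omega\setminus\{w\}$ onto $Y\setminus\{w\}$ extending to a holomorphic map of $\Omega$ onto $Y$. But $i$ is the identity on $\Omega$, so its image is $\Omega$, which is a proper subset of $Y$ by hypothesis — contradicting surjectivity onto $Y$. Hence strict inequality holds at every point $w\in\Omega$. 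The main obstacle is making sure the equality clause of Theorem~A is invoked correctly (in particular that $i$ does satisfy the hypotheses $i(w)=w$, $i(z)\neq w$ for $z\neq w$, $i'(w)=1\neq 0$, so that Theorem~A genuinely applies and its equality characterization is available); once that is set up, the proper-inclusion contradiction is immediate. Note this last step only gives strictness pointwise, not that $l_{\eta}(\Omega,Y)<1$ as a supremum, which is consistent with the statement of the theorem.
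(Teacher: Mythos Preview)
Your proposal is correct and follows essentially the same route as the paper. The paper proves $gl_{\eta}\le l_{\eta}$ by the same pointwise bound inside the length integral (though it tacitly picks a length-minimizing path in $\Omega$, whereas you more carefully take the infimum over all rectifiable paths), cites the pre-theorem paragraph for $l_{\eta}\le 1$, and for the strictness part simply refers to \cite[Theorem~6.1]{Min16}; your unpacking of that step via the equality clause of Theorem~A is exactly the content of that cited result, and your closing remark about pointwise strictness versus the supremum is on point.
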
 

\begin{proof}
Let $w_1,w_2$ be any two points in $\Omega$ and $\gamma\subset\Omega$ be any path joining $w_1$ and $w_2$ such that 
$$
\eta_{\Omega}(w_1,w_2)=\int_{\gamma}\eta_{\Omega}(w)|dw|.
$$
By the definition of $\eta_Y(w_1,w_2),$ it follows that
$$
\eta_{Y}(w_1,w_2)\leq\int_{\gamma}\cfrac{\eta_Y(w)}{\eta_{\Omega}(w)}\,
\eta_{\Omega}(w)|dw|\leq l_{\eta}(\Omega,Y)\,\eta_{\Omega}(w_1,w_2).
$$
Thus, $gl_{\eta}(\Omega,Y)\leq l_{\eta}(\Omega,Y).$

The proof of the second part of our theorem follows from \cite[Theorem~6.1]{Min16}.
\end{proof}

\begin{definition}
Let $\Omega$ be a subdomain of a domain $Y$ in $\mathbb{C}$. Then $\Omega$ is called a {\em Hurwitz Lipschitz subdomain} of $Y,$ if the inclusion map from $\Omega$ to $Y$ is a strict infinitesimal contraction. That is, the infinitesimal contraction constant $l_{\Omega}$ is strictly less than $1.$   
\end{definition}

By Proposition $\ref{prop1},$ for any proper subdomain $\Omega$ of $\mathbb{C},$ we have $\eta_{\Omega}^Y\geq\eta_{\Omega}.$ However, in the following theorem, we find a condition on $Y$ so that for every proper subdomain
$\Omega$ of $\mathbb{C}$, the Hurwitz and the generalized Hurwitz densities coincide.
We adopt the proof technique from \cite[Theorem~2.1]{Tav09} where the author compares the hyperbolic density with the Kobayashi density. 

\begin{theorem}\label{thm7}
If $\Omega$ is any proper subdomain of $\mathbb{C}$ and $Y$ is a Hurwitz non-Lipschitz subdomain of $\mathbb{D},$ then we have
$$
\eta_{\Omega}^{Y}(w)=\eta_{\Omega}(w),
$$  
for every $w$ in $\Omega.$
\end{theorem}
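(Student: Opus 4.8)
The plan is to exploit the hypothesis that $Y$ is a \emph{Hurwitz non-Lipschitz} subdomain of $\mathbb{D}$, i.e. $l_\eta(Y,\mathbb{D})=1$, which by definition of the infinitesimal contraction constant means
$$
\sup_{s\in Y}\frac{\eta_{\mathbb{D}}(s)}{\eta_Y(s)}=1 .
$$
Combined with $\eta_{\mathbb{D}}\le\eta_Y$ on $Y$ (distance decreasing applied to the inclusion), this gives a sequence $s_n\in Y$ with $\eta_{\mathbb{D}}(s_n)/\eta_Y(s_n)\to 1$. The idea is to use these near-extremal points to manufacture, for a given $w\in\Omega$, competitors in the definition of $\eta_\Omega^Y(w)$ whose ratio $\eta_Y(s_n)/|h'(s_n)|$ approaches $\eta_\Omega(w)$; together with Proposition~\ref{prop2} (which gives $\eta_\Omega^Y\ge\eta_\Omega$) this forces equality.

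First I would fix $w\in\Omega$ and recall from Definition~\ref{def1} and the remark following Definition~\ref{def3} that there is a Hurwitz covering map $g$ from $\mathbb{D}$ onto $\Omega$ with $g(0)=w$, $g'(0)=r_\Omega(w)>0$, realizing $\eta_\Omega(w)=\eta_{\mathbb{D}}(0)/g'(0)$; more generally, for any $s\in\mathbb{D}$, composing with the M\"obius map $T_s(z)=(z-s)/(1-\bar s z)$ yields $g_s:=g\circ T_s$ with $\eta_\Omega(w)=\eta_{\mathbb{D}}(s)/g_s'(s)$ (this is exactly the computation in Section~\ref{sec3}). Next, for each $n$ I would restrict $g_{s_n}$ to $Y\subset\mathbb{D}$: this is a holomorphic map $Y\to\Omega$ with $g_{s_n}(s_n)=w$, $g_{s_n}(t)\ne w$ for $t\in Y\setminus\{s_n\}$ (since it is injective on fibers as a covering of $\mathbb{D}\setminus\{s_n\}$ onto $\Omega\setminus\{w\}$), and $g_{s_n}'(s_n)\ne 0$, so it is an admissible competitor in Definition~\ref{def4}. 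Hence
$$
\eta_\Omega^Y(w)\ \le\ \frac{\eta_Y(s_n)}{g_{s_n}'(s_n)}\ =\ \frac{\eta_Y(s_n)}{\eta_{\mathbb{D}}(s_n)}\cdot\frac{\eta_{\mathbb{D}}(s_n)}{g_{s_n}'(s_n)}\ =\ \frac{\eta_Y(s_n)}{\eta_{\mathbb{D}}(s_n)}\,\eta_\Omega(w).
$$
Letting $n\to\infty$, the ratio $\eta_Y(s_n)/\eta_{\mathbb{D}}(s_n)\to 1$ by the non-Lipschitz hypothesis, so $\eta_\Omega^Y(w)\le\eta_\Omega(w)$; combined with Proposition~\ref{prop2} this yields $\eta_\Omega^Y(w)=\eta_\Omega(w)$, and since $w$ was arbitrary we are done.

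The main obstacle is the verification that the restriction $g_{s_n}|_Y$ is a legitimate competitor — specifically that $g_{s_n}(t)\ne w$ for all $t\in Y\setminus\{s_n\}$, which is inherited from the corresponding property on $\mathbb{D}$, and that the supremum defining $l_\eta(Y,\mathbb{D})$ is genuinely attained in the limit (that a maximizing sequence $s_n$ exists, possibly escaping to $\partial Y$, rather than the sup being an unattained value with a subtle obstruction). One should also double-check that the quantity $\eta_Y(s)$ appearing in Definition~\ref{def4} is the \emph{Hurwitz} density of $Y$ and that the non-Lipschitz condition is stated precisely in terms of this Hurwitz density and not the hyperbolic one — here the hypothesis is exactly tailored to that, and the argument above mirrors the Kobayashi-versus-hyperbolic comparison of \cite[Theorem~2.1]{Tav09}, so the adaptation should go through cleanly.
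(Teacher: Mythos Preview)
Your proof is correct and follows essentially the same route as the paper's: take the Hurwitz covering $g:\mathbb{D}\to\Omega$ with $g(0)=w$, pre-compose with the M\"obius map sending a chosen point $s\in Y$ to $0$, restrict to $Y$ to get an admissible competitor in Definition~\ref{def4}, and obtain $\eta_\Omega^Y(w)\le(\eta_Y(s)/\eta_{\mathbb D}(s))\,\eta_\Omega(w)$; the non-Lipschitz hypothesis $l_\eta(Y,\mathbb D)=1$ then lets the ratio be driven to $1$. The only cosmetic difference is that you phrase the final step via an explicit sequence $s_n$ while the paper simply says ``by choosing $s$ appropriately''; your discussion of the competitor's admissibility (that $g_{s_n}(t)\ne w$ for $t\in Y\setminus\{s_n\}$ is inherited from $\mathbb D$) is if anything more careful than the paper's.
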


\begin{proof}
To show that $\eta_{\Omega}^Y=\eta_{\Omega},$ we only need to show that $\eta_{\Omega}^Y\leq\eta_{\Omega},$ as the relation $\eta_{\Omega}^Y\geq\eta_{\Omega}$ always holds. Since $\Omega\subsetneq\mathbb{C},$ for every point $w\in\Omega$ there exists a Hurwitz covering map $g$ from $\mathbb{D}$ onto $\Omega$ with $g(0)=w,~g(s)\neq w$ for all $s\in\mathbb{D}\setminus\{0\}$ and $g'(0)\neq 0.$ We now pre-compose $g$ with a M\"obius transformation $T$ of $\mathbb{D}$ that maps $s$ in $Y$ to the origin. Therefore $g\circ T$ is a holomorphic covering of $\Omega$ from $\mathbb{D}\setminus\{s\}$ onto $\Omega\setminus\{w\},$ which extends to a holomorphic function from 
$\mathbb{D}$ to $\Omega$ with $(g\circ T)(s)=w,~(g\circ T)(t)\neq w$ for all $t$ in $\mathbb{D}\setminus\{s\}$ and $(g\circ T)'(s)=g'(0)T'(s)\neq 0.$ Thus, by Theorem~A, we have
\begin{equation}\label{eq11}
\eta_{\Omega}(w)|(g\circ T)'(s)|=\eta_{\mathbb{D}}(s).
\end{equation}
Now, let $h$ be the restriction of $g\circ T$ on $Y.$ By the definition of $\eta_{\Omega}^Y,$ we obtain
$$
\eta_{\Omega}^Y(w)\leq\cfrac{\eta_{Y}(s)}{|h'(s)|}=
\cfrac{\eta_{Y}(s)}{|(g\circ T)'(s)|}.
$$     
On the other hand, by the help of $\eqref{eq11},$ we have 
$$
\eta_{\Omega}^Y(w)\leq\cfrac{\eta_{Y}(s)}{\eta_{\mathbb{D}}(s)}
\eta_{\Omega}(w).
$$
Since $Y$ is a non-Lipschitz Hurwitz subdomain of $\mathbb{D},$ by choosing $s$ in $Y$ appropriately, $\eta_Y(s)/\eta_{\mathbb{D}}(s)$ can be made as close to $1$ as we wish. Thus, we can say that
$$
\eta_{\Omega}^Y(w)\leq\eta_{\Omega}(w).
$$ 
Since $w\in\Omega$ is an arbitrary element, therefore we have $\eta_{\Omega}^Y(w)=\eta_{\Omega}(w) $ for all $w$ in $\Omega.$
\end{proof} 

In order to generalize Theorem $\ref{thm7} $ for a broader class of domains in $\mathbb{C}$, we now discuss the notion of quasi-bounded domains as follows. 

\begin{definition}
A domain $Y$ in $\mathbb{C}$ is said to be {\em quasi-bounded} if the smallest simply connected plane domain containing $Y$ is a proper subset of $\mathbb{C}$. We denote the smallest simply connected domain by $\hat{Y}.$
\end{definition}

\begin{example}
All bounded domains in $\mathbb{C}$ are quasi-bounded.
\end{example}  

The following theorem is an analog of \cite[Theorem~2.2]{Tav09} from the notion of hyperbolic density to the notion of Hurwitz density.

\begin{theorem}\label{thm8}
If $\Omega$ is any proper subdomain of the complex plane and $Y$ is quasi-bounded, non-Lipschitz Hurwitz subdomain of $\hat{Y},$ then
$$
\eta_{\Omega}^Y(w)=\eta_{\Omega}(w),
$$ 
for all $w$ in $\Omega.$
\end{theorem}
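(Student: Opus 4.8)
The plan is to reduce Theorem~\ref{thm8} to the already-established Theorem~\ref{thm7} by transporting the basepoint domain through a Riemann map. Since $Y$ is quasi-bounded, the smallest simply connected domain $\hat Y$ containing $Y$ is a \emph{proper} subdomain of $\mathbb{C}$, hence simply connected and hyperbolic, so by the Riemann Mapping Theorem there is a conformal homeomorphism $\varphi\colon\hat Y\to\mathbb{D}$. It restricts to a conformal homeomorphism of $Y$ onto a subdomain $\varphi(Y)\subset\mathbb{D}$, and since $\varphi(Y)\subset\mathbb{D}\subsetneq\mathbb{C}$ and $Y\subset\hat Y\subsetneq\mathbb{C}$, both $Y$ and $\varphi(Y)$ are proper subdomains of $\mathbb{C}$.

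Next I would verify that $\varphi(Y)$ is a Hurwitz non-Lipschitz subdomain of $\mathbb{D}$. Applying the equality case of Theorem~A to the conformal homeomorphism $\varphi\colon\hat Y\to\mathbb{D}$ gives $\eta_{\mathbb{D}}(\varphi(s))\,|\varphi'(s)|=\eta_{\hat Y}(s)$, and applying it to $\varphi|_Y\colon Y\to\varphi(Y)$ gives $\eta_{\varphi(Y)}(\varphi(s))\,|\varphi'(s)|=\eta_{Y}(s)$, for every $s\in Y$. Dividing these two identities cancels the factor $|\varphi'(s)|$ and yields
\[
\frac{\eta_{\hat Y}(s)}{\eta_{Y}(s)}=\frac{\eta_{\mathbb{D}}(\varphi(s))}{\eta_{\varphi(Y)}(\varphi(s))},\qquad s\in Y.
\]
Taking the supremum over $s\in Y$ (equivalently over the point $\varphi(s)$ ranging in $\varphi(Y)$) shows $l_{\eta}(\varphi(Y),\mathbb{D})=l_{\eta}(Y,\hat Y)$. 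Since $Y$ is a non-Lipschitz Hurwitz subdomain of $\hat Y$, the right-hand side equals $1$, so $l_\eta(\varphi(Y),\mathbb{D})=1$ and $\varphi(Y)$ is indeed a non-Lipschitz Hurwitz subdomain of $\mathbb{D}$.

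Now I would invoke Theorem~\ref{thm7} with basepoint domain $\varphi(Y)$: because $\Omega$ is a proper subdomain of $\mathbb{C}$ and $\varphi(Y)$ is a non-Lipschitz Hurwitz subdomain of $\mathbb{D}$, it gives $\eta_{\Omega}^{\varphi(Y)}(w)=\eta_{\Omega}(w)$ for every $w\in\Omega$. Finally, $Y$ and $\varphi(Y)$ are conformally equivalent proper subdomains of $\mathbb{C}$ (via $\varphi|_Y$), so Corollary~\ref{cor4.17} gives $\eta_{\Omega}^{Y}(w)=\eta_{\Omega}^{\varphi(Y)}(w)$ for all $w\in\Omega$. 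Chaining the two equalities produces $\eta_{\Omega}^{Y}(w)=\eta_{\Omega}(w)$, as desired.

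The whole argument is essentially bookkeeping once one accepts the conformal invariance of the Hurwitz density (which is exactly the equality clause of Theorem~A specialized to conformal maps) and the consequent invariance of the infinitesimal contraction constant under a conformal change of basepoint domain; the single place the hypothesis is actually used is that quasi-boundedness guarantees $\hat Y\subsetneq\mathbb{C}$, so that $\eta_{\hat Y}$, the constant $l_\eta(Y,\hat Y)$, and the very notion of a ``non-Lipschitz subdomain of $\hat Y$'' are meaningful. I do not expect a serious obstacle; the main thing to get right is this well-definedness point together with the direction of the inequality $\eta_{\hat Y}\le\eta_Y$ on $Y$. If one prefers a self-contained proof that does not route through Corollary~\ref{cor4.17}, the proof of Theorem~\ref{thm7} transfers verbatim with $\mathbb{D}$ replaced by $\hat Y$: pick a Hurwitz covering $g\colon\mathbb{D}\to\Omega$ with $g(0)=w$, a Riemann map $\psi\colon\hat Y\to\mathbb{D}$, and a M\"obius self-map $T$ of $\mathbb{D}$ with $T(\psi(s))=0$; restrict the covering $g\circ T\circ\psi$ to $Y$; use the equality case of Theorem~A (for the domains $\hat Y$ and $\Omega$) to get $|(g\circ T\circ\psi)'(s)|=\eta_{\hat Y}(s)/\eta_{\Omega}(w)$; deduce $\eta_{\Omega}^{Y}(w)\le\bigl(\eta_Y(s)/\eta_{\hat Y}(s)\bigr)\eta_{\Omega}(w)$; let $\eta_Y(s)/\eta_{\hat Y}(s)\to1$ using non-Lipschitzness; and combine with Proposition~\ref{prop2}.
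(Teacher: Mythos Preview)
Your proof is correct and follows essentially the same route as the paper: transport $Y$ into $\mathbb{D}$ by a Riemann map of $\hat Y$, use conformal invariance of the Hurwitz density to identify $l_\eta(Y,\hat Y)$ with $l_\eta(\varphi(Y),\mathbb{D})$, apply Theorem~\ref{thm7} to $\varphi(Y)$, and then invoke Corollary~\ref{cor4.17}. The only cosmetic difference is that you justify the isometry identities via the equality clause of Theorem~A, whereas the paper cites the equivalent statement from \cite[Corollary~6.2]{Min16}.
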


\begin{proof}
Since, $\hat{Y}$ is simply connected, by Riemann Mapping Theorem there exists a conformal homeomorphism $h$ from $\hat{Y}$ onto $\mathbb{D.}$ Note that the conformal mappings are isometries for the Hurwitz metric (see \cite[Corollary~6.2]{Min16}), and thus we have
\begin{equation}\label{eq12}
\eta_{\mathbb{D}}(h(s))|h'(s)|=\eta_{\hat{Y}}(s)
\end{equation}
for all $s$ in $\hat{Y}.$ Furthermore, the restriction of $h$ to $Y,$ resulting a conformal homeomorphism from $Y$ onto $h(Y).$ Therefore, we have
\begin{equation}\label{eq13}
\eta_{h(Y)}(s)|h'(s)|=\eta_{Y}(s).
\end{equation}
 By using $\eqref{eq12},~\eqref{eq13}$ and the definition of $l_{\eta}(Y,\hat{Y}),$ we obtain
\begin{equation*}
l_{\eta}(Y,\hat{Y})=\sup_{z\in Y}\cfrac{\eta_Y(z)}{\eta_{\hat{Y}}(z)}=\sup_{z\in h(Y)}
\cfrac{\eta_{h(Y)}(z)}{\eta_{\mathbb{D}}(z)}=l_{\eta}(h(Y),\mathbb{D}).
\end{equation*}
Thus, $Y$ is a non-Lipschitz Hurwitz subdomain of $\hat{Y}$ if and only if $h(Y)$ is non-Lipschitz Hurwitz subdomain of $\mathbb{D}.$ By Theorem $\ref{thm7},$ it follows that 
\begin{equation}\label{eq14}
\eta_{\Omega}^{h(Y)}(w)=\eta_{\Omega}(w)
\end{equation} 
for all $w$ in $\Omega.$ Since $Y$ and $f(Y)$ are conformally homeomorphic by the map $f,$ by Corollary~\ref{cor4.17}, we have
\begin{equation}\label{eq15}
\eta_{\Omega}^{Y}(w)=\eta_{\Omega}^{h(Y)}(w)
\end{equation}
for all $w$ in $\Omega.$ Combining $\eqref{eq14}$ and $\eqref{eq15},$ we obtain
\[ \eta_{\Omega}^{Y}(w)=\eta_{\Omega}(w), \]
as desired.
\end{proof}

\bigskip
\noindent 
{\bf Acknowledgement.}
The authors would like to thank the referees for their careful reading of the earlier versions of the manuscript and useful remarks. 
The research work of Arstu is 
supported by CSIR-UGC $($Grant No: 21/06/2015(i)EU-V$)$ and
of S. K. Sahoo is 
supported by NBHM, DAE $($Grant No: $2/48 (12)/2016/${\rm NBHM (R.P.)/R \& D II}/$13613)$.
The authors express their gratitude to Professor Toshiyuki Sugawa for useful discussion on this topic.

\medskip

\end{document}